\documentclass[10pt]{article}

\usepackage[T1]{fontenc}
\usepackage[utf8]{inputenc}

\usepackage[top=25mm, bottom=25mm, left=25mm, right=25mm, a4paper]{geometry}

\usepackage{graphicx}
\usepackage{multirow}
\usepackage{amsmath,amssymb,amsfonts}
\usepackage{amsthm}
\usepackage{mathrsfs}
\usepackage[title]{appendix}
\usepackage{xcolor}
\usepackage{textcomp}
\usepackage{manyfoot}
\usepackage{booktabs}
\usepackage{algorithm}
\usepackage{algorithmicx}
\usepackage{algpseudocode}
\usepackage{listings}
\usepackage[none]{hyphenat}

\usepackage{hyperref}

\numberwithin{equation}{section}

\theoremstyle{plain}
\newtheorem{theorem}{Theorem}
\newtheorem{proposition}[theorem]{Proposition}
\newtheorem{lemma}[theorem]{Lemma}

\theoremstyle{definition}

\newtheorem{example}{Example}

\theoremstyle{remark}
\newtheorem{remark}{Remark}

\makeatletter
\renewcommand{\maketitle}{%
  \begin{center}
    {\Large\bfseries \@title \par}
    \vskip 1.5em
    {\normalsize \@author \par}
    \vskip 1em
  \end{center}
}
\makeatother

\newcommand{\keywords}[1]{%
  \par\medskip
  \noindent\textbf{Keywords.} #1
}

\newcommand{\subjclass}[2][]{%
  \par\medskip
  \noindent\textbf{2020 Mathematics Subject Classification.} #2
}

\begin{document}

\sloppy

\title{Multiplicity Results for the Nonlinear Weighted Laplacian Equation on the Sierpi\'nski Gasket}

\author{Micha{\l} Be{\l}dzi\'nski,\qquad  Krzysztof Jelito}

\date{}

\maketitle

\begin{center}
\small
\textsc{Micha{\l} Be{\l}dzi\'nski}\\
Institute of Mathematics, Lodz University of Technology,\\
al. Politechniki 8, 93-590 {\L}\'od\'z, Poland\\
\texttt{michal.beldzinski@p.lodz.pl}

\vspace{0.8em}

\textsc{Krzysztof Jelito}\\
Institute of Mathematics, Lodz University of Technology,\\
al. Politechniki 8, 93-590 {\L}\'od\'z, Poland\\
Department of Mathematics, Silesian University of Technology,\\
Kaszubska 23, 44-100 Gliwice, Poland\\
\texttt{krzysztof.jelito@dokt.p.lodz.pl}\\
\texttt{Krzysztof.Jelito@polsl.pl}

\vspace{0.8em}

The authors contributed equally to this work.
\end{center}

\begin{abstract}
In this paper we consider the Dirichlet boundary value problem driven by the weighted Laplacian equation and considered on the Sierpi\'nski gasket. Under suitable growth conditions imposed on the nonlinear term, we provide existence and multiplicity results employing the direct method, mountain pass theorem and some related multiplicity result.
\end{abstract}

\keywords{Sierpi\'nski gasket, weighted Laplacian, variational methods, Mountain Pass Lemma, multiplicity results}

\subjclass[2020]{28A80, 35J50, 35J92}

\section{Introduction}
We study a Dirichlet boundary value problem involving a weighted Laplacian on the
Sierpi\'nski gasket $V$, namely
\begin{equation}
    \label{equ:Main_inclusion}
    \begin{cases}
        -\Delta_a u(\boldsymbol{x}) \ni f\big(\boldsymbol{x},u(\boldsymbol{x})\big)
        & \text{for } \boldsymbol{x}\in V\setminus V_0,\\[2mm]
        u(\boldsymbol{x})=0
        & \text{for } \boldsymbol{x}\in V_0,
    \end{cases}
\end{equation}
where $f\colon V\times\mathbb{R}\to\mathbb{R}$ is an $L^1$--Carath\'eodory function satisfying suitable growth assumptions specified below, function $a$ is bounded and strictly positive. $V_0$ denotes the intrinsic boundary of $V$. The passage from equality to inclusion is necessitated by the fact that, the operator $-\Delta_a$ may be multivalued. A detailed explanation of this phenomenon will be discussed in Section \ref{subsec:Construction_of_Delta_a}.

Differential equations on fractal domains have attracted considerable attention in recent decades. We refer to \cite{Barnsley,Kigami2001,Strichartz2006} for background material on analysis on fractals and related elliptic problems.

Falconer and Hu \cite{Falconer1999} established the existence of solutions to the problem
\begin{equation*}
    \begin{cases}
        \Delta u + a(\boldsymbol{x})u
        = f\big(\boldsymbol{x},u(\boldsymbol{x})\big)
        & \text{for $\mu$-a.e. } \boldsymbol{x}\in V\setminus V_0,\\
        u(\boldsymbol{x})=0
        & \text{for } \boldsymbol{x}\in V_0,
    \end{cases}
\end{equation*}
where $a$ is integrable and the nonlinearity $f$ satisfies suitable growth
conditions near zero and at infinity. Their approach relied on variational methods,
in particular the Mountain Pass and Saddle Point theorems.

Subsequently, Molica Bisci and R\u{a}dulescu \cite{MolicaBisciRadulescu2015}, as well
as Molica Bisci, Bonanno, and R\u{a}dulescu \cite{BonannoBisciRadulescu2012}, proved
existence and multiplicity results for various generalizations of the problem
originally studied by Falconer and Hu.

In \cite{Galewski2019,Galewski2019b}, Galewski investigated the boundary value
problem
\begin{equation*}
    \begin{cases}
        \Delta u(\boldsymbol{x}) + a(\boldsymbol{x})u(\boldsymbol{x})
        = f\big(\boldsymbol{x},u(\boldsymbol{x}),w(\boldsymbol{x})\big)
        + g(\boldsymbol{x})
        & \text{for $\mu$-a.e. } \boldsymbol{x}\in V\setminus V_0,\\
        u(\boldsymbol{x})=0
        & \text{for } \boldsymbol{x}\in V_0,
    \end{cases}
\end{equation*}
with $a,g\in L^1(V)$, $w\in L^2(V)$, and a nonlinearity $f$ satisfying appropriate growth assumptions. In \cite{Galewski2019} the case $g\equiv0$ was considered and the dependence of solutions on parameters was analyzed via the Mountain Pass Theorem, while in \cite{Galewski2019b} monotonicity methods were employed to establish existence and uniqueness results in the fractal setting.

More recently, Priyadarshi and Sahu \cite{SahuPriyadarshi2021} studied systems of equations involving the $p$--Laplacian on fractal domains. 

It seems that boundary value problems involving weighted Laplacians on fractals have not yet been addressed in the literature. The definition of the weighted Laplacian proposed in the present paper allows us to work within the Hilbert space $H_0^1(V)$, which is particularly convenient for variational methods.

Our main tool in establishing the existence of nontrivial solutions is a set of abstract results from~\cite{Bereanu2012}. These theorems combine the direct minimization method with the Mountain Pass Lemma. To the best of the authors' knowledge, they have not previously been applied to equations on the Sierpi\'nski gasket $V$, even in the case of the standard Laplacian $-\Delta$. In particular, the present work appears to be the first to employ them in the study of equations driven by the weighted Laplacian considered here.

The paper is organized as follows. In Section~\ref{sec:Preliminaries}, we establish basic properties of Szulkin-type functionals together with the methods applied. Then, we recall the standard construction of the Sierpi\'nski gasket via an iterated function system, following \cite{Barnsley}. Next, we introduce the Laplacian $-\Delta$ in the sense of Kigami \cite{GibbonsRajStrichartz,Kigami2001} and subsequently define the weighted Laplacian $-\Delta_a$, emphasizing its relation to the classical Laplacian on $V$. Then, we define a Szulkin-type functional that plays the role of the Euler action functional associated with \eqref{equ:Main_inclusion}. We impose assumptions on the nonlinearity $f$ and derive a series of lemmas establishing the key properties of the corresponding nonlinear term. This leads to the formulation of the final Szulkin-type functional, whose properties are further investigated. Finally, in Section~\ref{sec: Main result} we apply these methods to problem \eqref{equ:Main_inclusion}. 

\section{Preliminaries}
\label{sec:Preliminaries}

\subsection{Variational tools}

Throughout this subsection we assume that
\begin{equation}
    \label{ass:E}
    \tag{$E$}
    \boxed{
    (E,\|\cdot\|)\ \text{is a real and reflexive Banach space.}
    }
\end{equation}
The norm in the dual space $E^*$ is denoted by $\|\cdot\|_{E^*}$. Moreover, for every $r>0$ we set
\[
B_r := \{u\in E : \|u\|<r\}.
\]
On the space $E$ we consider a Szulkin-type functional of the form
\begin{equation}
    \label{equ:Szulkin-type_functional}
    I = \psi + \Phi,
\end{equation}
where
\begin{equation}
    \label{ass:psi}
    \tag{$\psi$}
    \boxed{
    \psi \colon E \to \mathbb{R}\ \text{is convex and continuous.}
    }
\end{equation}
It is common to assume that the functional $\psi$ is lower semicontinuous. In the present setting, however, where $\psi$ takes only finite values, this assumption is in fact equivalent to continuity; see \cite[Corollary~4.1]{Galewski-book}. The same applies to weak lower semicontinuity, see \cite[Theorem~1.4]{Figueiredo}. Recall that, for every $u\in E$, the functional $\psi$ admits a subdifferential given by
\begin{equation*}
    \partial \psi(u) = \left\{\eta\in E^* : \psi(v)-\psi(u) \ge \langle \eta, v-u\rangle \text{ for all } v\in E\right\}.
\end{equation*}
Concerning $\Phi$, we shall assume that
\begin{equation}
    \label{ass:Phi}
    \tag{$\Phi$}
    \boxed{
    \begin{tabular}{l}
        $\Phi \colon E \to \mathbb{R}$ is of class $C^1$. Moreover, $\Phi'$ is weak-to-strong continuous, i.e.\\
        \qquad $u_n \rightharpoonup u$ in $E$ implies $\Phi'(u_n) \to \Phi'(u)$ in $E^*$. 
    \end{tabular}
    }
\end{equation}
Following \cite{Szulkin}, we call $u\in E$ a \emph{critical point of $I$} if $-\Phi'(u)\in \partial\psi(u)$, or equivalently, if
\[
\psi(v)-\psi(u) + \langle \Phi'(u), v-u\rangle \ge 0
\quad \text{for all } v\in E.
\]

We say that $I$ satisfies the \emph{Palais--Smale condition} if every sequence $(u_n)\subset E$ such that:
\begin{itemize}
    \item the sequence $\bigl(I(u_n)\bigr)$ is bounded,
    \item there exists a sequence $(\varepsilon_n)\subset \mathbb{R}$ with $\varepsilon_n\to 0$ and
    \begin{equation}
        \label{equ:almost_critical}
        \psi(v)-\psi(u_n)+\langle \Phi'(u_n), v-u_n\rangle
        \ge -\varepsilon_n \|v-u_n\|
        \quad \text{for all } v\in E,
    \end{equation}
\end{itemize}
admits a convergent subsequence. Sequences satisfying the above conditions are called \emph{Palais--Smale sequences} for $I$.

We impose the following additional assumption on $\psi$:
\begin{equation}
    \label{ass:psi_rho}
    \tag{$\psi_\rho$}
    \boxed{
    \begin{tabular}{l}
        there exists an increasing function $\rho\colon[0,\infty)\to[0,\infty)$ with $\rho(0)=0$ s.t.\\
        \qquad
        $\psi(v)\ge \psi(u)+\langle \eta, v-u\rangle
        +\rho(\|u-v\|)\|u-v\|$\\ 
        for all $u,v\in E$ and every $\eta\in \partial\psi(u)$.
    \end{tabular}
    }
\end{equation}

It was shown in \cite[Lemma 3.1]{Pietrusiak} that condition \eqref{ass:psi_rho} is equivalent to the $\rho$-uniform monotonicity of $\partial\psi$ and to the so-called $\rho$-uniform convexity of $\psi$. Both notions are discussed in detail in \cite{Pietrusiak}. Here we restrict ourselves to assumption \eqref{ass:psi_rho}, which allows us to prove the following counterpart of \cite[Theorem 21]{Galewski}.

\begin{lemma}
    \label{lem:every_(PS)_is_bounded=>I_(PS)}
    Assume that \eqref{ass:E}, \eqref{ass:psi}, \eqref{ass:psi_rho}, and \eqref{ass:Phi} hold. Then the functional $I$ defined by \eqref{equ:Szulkin-type_functional} satisfies the Palais--Smale condition if and only if every Palais--Smale sequence possesses a bounded subsequence.
\end{lemma}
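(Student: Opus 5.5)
The implication ``(PS) $\Rightarrow$ every Palais--Smale sequence has a bounded subsequence'' is immediate, since a convergent subsequence is bounded. The work is in the converse. Let $(u_n)$ be a Palais--Smale sequence and pass to a bounded subsequence, still denoted $(u_n)$. Since $E$ is reflexive by \eqref{ass:E}, we may pass to a further subsequence with $u_n \rightharpoonup u$ in $E$. By \eqref{ass:Phi}, $\Phi'(u_n)\to\Phi'(u)$ in $E^*$. The goal is to show that $u_n\to u$ strongly.

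The key step is to combine the almost-critical inequality \eqref{equ:almost_critical} with the uniform convexity inequality \eqref{ass:psi_rho}. First, take $v=u$ in \eqref{equ:almost_critical}:
\[
\psi(u)-\psi(u_n)+\langle \Phi'(u_n),u-u_n\rangle \ge -\varepsilon_n\|u-u_n\|.
\]
Second, use \eqref{ass:psi_rho} with base point $u$ and a fixed $\eta\in\partial\psi(u)$. The subdifferential is nonempty because $\psi$ is convex and continuous and takes finite values. With $v=u_n$ this gives
\[
\psi(u_n)\ge\psi(u)+\langle\eta,u_n-u\rangle+\rho(\|u_n-u\|)\|u_n-u\|.
\]
Adding the two inequalities cancels the $\psi$ terms and yields
\[
\rho(\|u_n-u\|)\|u_n-u\| \le \langle \Phi'(u_n)-\eta,\,u-u_n\rangle+\varepsilon_n\|u-u_n\|.
\]
Split the right-hand side as $\langle\Phi'(u_n)-\Phi'(u),u-u_n\rangle+\langle\Phi'(u)-\eta,u-u_n\rangle$. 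The first term tends to $0$ because $\Phi'(u_n)\to\Phi'(u)$ strongly and $\|u-u_n\|$ is bounded. The second term tends to $0$ because $u_n\rightharpoonup u$ and $\Phi'(u)-\eta$ is a fixed element of $E^*$. The term $\varepsilon_n\|u-u_n\|$ also tends to $0$. Hence $\rho(\|u_n-u\|)\|u_n-u\|\to 0$.

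The remaining step, which is the only delicate one, is to deduce $\|u_n-u\|\to0$ from this. I would argue by contradiction. If the conclusion fails, then along a subsequence $\|u_n-u\|\ge\delta>0$. Since $\rho$ is increasing, this gives $\rho(\|u_n-u\|)\|u_n-u\|\ge\rho(\delta)\delta>0$, a contradiction. This needs $\rho(\delta)>0$ for $\delta>0$, which follows from reading ``increasing'' as strictly increasing with $\rho(0)=0$; this is the standard meaning in the uniform convexity setting of \cite{Pietrusiak}, and I would note it explicitly. Therefore $u_n\to u$, and $I$ satisfies the Palais--Smale condition. Note that the boundedness of $(I(u_n))$ was never used beyond the hypothesis that a bounded subsequence exists.
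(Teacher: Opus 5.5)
Your proof is correct and follows the paper's argument: you test \eqref{equ:almost_critical} with $v=u$, apply \eqref{ass:psi_rho} at the base point $u$ with $v=u_n$, and use the weak-to-strong continuity of $\Phi'$ together with $u_n\rightharpoonup u$ to send the right-hand side to zero. One small slip is that adding the two inequalities gives $\langle \Phi'(u_n)+\eta,\,u-u_n\rangle$, not $\langle \Phi'(u_n)-\eta,\,u-u_n\rangle$; this is harmless because the $\eta$-term vanishes by weak convergence either way, and your explicit remark that one needs $\rho(\delta)>0$ for $\delta>0$ spells out a step the paper uses tacitly.
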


\begin{proof}
    The necessity is obvious. We show sufficiency. As observed in \cite[Proposition 1.2]{Szulkin}, condition \eqref{equ:almost_critical} is equivalent to the existence of a sequence $(\eta_n)\subset E^*$ such that $\eta_n\to 0$ and
    \[
    \eta_n - \Phi'(u_n) \in \partial\psi(u_n).
    \]
    Let $(u_n)$ be a bounded Palais--Smale sequence. Passing to a subsequence if necessary (not relabeled), we may assume that $u_n\rightharpoonup u$ in $E$. Choosing $v=u$ in \eqref{equ:almost_critical}, and $v=u_n$ in \eqref{ass:psi_rho} with an arbitrary $\eta\in \partial\psi(u)$, we obtain
    \begin{equation}
        \label{equ:prop:every}
        \rho(\|u_n-u\|)\|u_n-u\|
        \le \varepsilon_n\|u-u_n\|
        + \langle \Phi'(u_n), u-u_n\rangle
        + \langle \eta, u-u_n\rangle .
    \end{equation}
    Since $u_n\rightharpoonup u$, the sequence $(\|u-u_n\|)$ is bounded and hence $\varepsilon_n\|u-u_n\|\to 0$. By assumption \eqref{ass:Phi}, we have $\Phi'(u_n)\to \Phi'(u)$ in $E^*$, which implies
    \[
    \bigl|\langle \Phi'(u_n), u-u_n\rangle\bigr|
    \le \|\Phi'(u_n)-\Phi'(u)\|_{E^*}\|u-u_n\|
    + |\langle \Phi'(u), u-u_n\rangle|
    \to 0.
    \]
    Moreover, $\langle \eta, u-u_n\rangle \to 0$. Therefore, \eqref{equ:prop:every} yields $\|u_n-u\|\to 0$, proving the claim.
\end{proof}

\begin{lemma}
    \label{lem:boundedness_of_I}
    Assume that \eqref{ass:E}, \eqref{ass:psi}, \eqref{ass:psi_rho}, and \eqref{ass:Phi} hold, and define $I$ by \eqref{equ:Szulkin-type_functional}. Then $I$ is bounded from below on bounded subsets of $E$.
\end{lemma}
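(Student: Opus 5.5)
We treat $\psi$ and $\Phi$ separately and show that each is bounded from below on every ball $B_r$. Since any bounded set lies in some $B_r$, this proves the claim for $I=\psi+\Phi$.

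For $\psi$ we use an affine minorant. Since $\psi$ is convex and continuous and takes finite values, $\partial\psi(0)\neq\emptyset$; this is the standard consequence of continuity at a point, via Hahn--Banach. Fix $\eta_0\in\partial\psi(0)$. The definition of the subdifferential gives, for every $v\in E$,
\[
\psi(v)\ge \psi(0)+\langle \eta_0,v\rangle \ge \psi(0)-\|\eta_0\|_{E^*}\|v\|,
\]
so $\psi\ge \psi(0)-r\|\eta_0\|_{E^*}$ on $B_r$. Assumption \eqref{ass:psi_rho} would even give growth at least $\rho(\|v\|)\|v\|$, but we do not need it here.

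For $\Phi$ the point is that continuity of $\Phi$ alone does not give boundedness on balls in infinite dimensions. This is where the weak-to-strong continuity in \eqref{ass:Phi} enters, and we expect it to be the main step. We first show that $\Phi'$ is bounded on $B_r$. Suppose not. Then there is $(u_n)\subset B_r$ with $\|\Phi'(u_n)\|_{E^*}\to\infty$. By reflexivity of $E$ we pass to a weakly convergent subsequence $u_n\rightharpoonup u$. Then \eqref{ass:Phi} gives $\Phi'(u_n)\to\Phi'(u)$ in $E^*$, so $\Phi'(u_n)$ is bounded, a contradiction. Hence $M:=\sup_{B_r}\|\Phi'\|_{E^*}<\infty$.

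Since $B_r$ is convex and $\Phi$ is $C^1$, the mean value theorem along the segment from $0$ to $v$ now yields
\[
|\Phi(v)-\Phi(0)|\le M\|v\|\le Mr\quad\text{for all } v\in B_r.
\]
Combining the two estimates,
\[
I(v)\ge \psi(0)+\Phi(0)-r\bigl(\|\eta_0\|_{E^*}+M\bigr)\quad\text{on } B_r,
\]
which proves the lemma.
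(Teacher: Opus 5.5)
Your proof is correct and follows the paper's argument. The $\Phi$ part is the same: you bound $\Phi'$ on $B_r$ using reflexivity and weak-to-strong continuity, then integrate along the segment from $0$. For $\psi$, the paper cites \cite[Lemma~4.2]{Pietrusiak}, whereas you give the self-contained affine-minorant argument via $\partial\psi(0)\neq\emptyset$, and you also keep the $\Phi(0)$ term that the paper's formula $\Phi(u)=\int_0^1\langle\Phi'(tu),u\rangle\,dt$ silently drops.
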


\begin{proof}
    We show that both $\psi$ and $\Phi$ are bounded from below on bounded sets. For $\psi$ this follows from \cite[Lemma 4.2]{Pietrusiak}. To verify boundedness of $\Phi$, recall (see~\cite{Galewski-book}) that
    \[
    \Phi(u)=\int_0^1 \langle \Phi'(tu),u\rangle\,dt
    \quad \text{for all } u\in E.
    \]
    Fix $r>0$ and set
    \[
    M := \sup_{u\in B_r}\|\Phi'(u)\|_{E^*}.
    \]
    To show that $M$ is finite let $(u_n)\subset B_r$ be such that $\|\Phi'(u_n)\|_{E^*}\to M$. Passing to a subsequence, we may assume that $u_n\rightharpoonup u$ for some $u\in B_r$. By \eqref{ass:Phi}, we have $M=\|\Phi'(u)\|_{E^*}<\infty$. Consequently,
    \[
    |\Phi(u)|
    \le \int_0^1 \|\Phi'(tu)\|_{E^*}\|u\|\,dt
    \le Mr
    \quad \text{for all } u\in B_r,
    \]
    which completes the proof.
\end{proof}

\begin{proposition}
    \label{prop:I_coercive=>(PS)}
    Assume that \eqref{ass:E}, \eqref{ass:psi}, \eqref{ass:psi_rho}, and \eqref{ass:Phi} hold, and define $I$ by \eqref{equ:Szulkin-type_functional}. If $I$ is \emph{coercive}, i.e.
    \[
    \lim_{\|u\|\to\infty} I(u)=\infty,
    \]
    then $I$ satisfies the Palais--Smale condition. Moreover, $I$ attains its global minimum: there exists $u^*\in E$ such that
    \[
    I(u^*)=\min_{u\in E} I(u).
    \]
\end{proposition}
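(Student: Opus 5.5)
The proof has two independent parts, both resting on the earlier lemmas.

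\emph{Palais--Smale condition.} By Lemma~\ref{lem:every_(PS)_is_bounded=>I_(PS)}, it suffices to show that every Palais--Smale sequence has a bounded subsequence. Let $(u_n)$ be a Palais--Smale sequence, so $\bigl(I(u_n)\bigr)$ is bounded above by some constant $C$. Suppose no subsequence of $(u_n)$ is bounded. Then $\|u_n\|\to\infty$, and coercivity gives $I(u_n)\to\infty$, which contradicts $I(u_n)\le C$. Hence $(u_n)$ is bounded, and the lemma yields a convergent subsequence. The almost-criticality condition \eqref{equ:almost_critical} is not used beyond what is already handled inside Lemma~\ref{lem:every_(PS)_is_bounded=>I_(PS)}.

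\emph{Existence of a minimizer.} I would use the direct method.

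First, $I$ is bounded from below on all of $E$. By coercivity there is $R>0$ with $I(u)\ge I(0)$ whenever $\|u\|\ge R$. On the ball $\overline{B_R}$ (or $B_{R+1}$), Lemma~\ref{lem:boundedness_of_I} gives a lower bound. Therefore $m:=\inf_E I$ is finite.

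Next, take a minimizing sequence $(u_n)$. Coercivity forces it to be bounded. Reflexivity of $E$ then gives a subsequence with $u_n\rightharpoonup u^*$.

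It remains to show that $I$ is weakly lower semicontinuous.
\begin{itemize}
\item For $\psi$: it is convex and continuous, hence weakly lower semicontinuous (as noted after \eqref{ass:psi}, via \cite[Theorem~1.4]{Figueiredo}).
\item For $\Phi$: I would show it is weakly sequentially continuous. Suppose $u_n\rightharpoonup u$. Then
\[
\Phi(u_n)-\Phi(u)=\int_0^1\langle \Phi'(u+t(u_n-u)),u_n-u\rangle\,dt .
\]
Write $\Phi'(u+t(u_n-u))=\Phi'(u)+\bigl[\Phi'(u+t(u_n-u))-\Phi'(u)\bigr]$.
\begin{itemize}
\item The term $\langle\Phi'(u),u_n-u\rangle$ tends to $0$ by weak convergence.
\item For each fixed $t$, we have $u+t(u_n-u)\rightharpoonup u$, so \eqref{ass:Phi} gives $\|\Phi'(u+t(u_n-u))-\Phi'(u)\|_{E^*}\to0$.
\item This bracket is uniformly bounded in $n$ and $t$, by the bound $M$ from the proof of Lemma~\ref{lem:boundedness_of_I} applied on a large ball. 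Since $\|u_n-u\|$ is also bounded, dominated convergence in $t$ makes the integral tend to $0$.
\end{itemize}
\end{itemize}
Consequently $I(u^*)\le\liminf I(u_n)=m$, so $u^*$ is a global minimizer.

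\emph{Main obstacle.} The only nontrivial step is the weak continuity of $\Phi$. It requires combining the pointwise-in-$t$ convergence from \eqref{ass:Phi} with a uniform bound on $\Phi'$ over bounded sets so that dominated convergence applies. An alternative is to argue by contradiction along subsequences using the mean value theorem, $\Phi(u_n)-\Phi(u)=\langle\Phi'(\xi_n),u_n-u\rangle$ with $\xi_n$ on the segment $[u,u_n]$. Since $\xi_n\rightharpoonup u$, \eqref{ass:Phi} gives $\Phi'(\xi_n)\to\Phi'(u)$ strongly, and the same splitting finishes the argument.
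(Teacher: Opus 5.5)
Your proof is correct. The Palais--Smale part is the same as the paper's: coercivity turns boundedness of $(I(u_n))$ into boundedness of $(u_n)$, and then Lemma~\ref{lem:every_(PS)_is_bounded=>I_(PS)} applies. For the minimizer, however, your route is genuinely different.

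The paper never establishes weak lower semicontinuity of $I$. It uses coercivity only to find $R>0$ with $I(0)<\inf_{E\setminus B_R}I$ and $I(0)<\inf_{\partial B_R}I$. It then combines this geometry with the Palais--Smale condition just proved and Lemma~\ref{lem:boundedness_of_I}, and invokes the abstract result \cite[Proposition~1]{Bereanu2012}.

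Your direct method instead uses a bounded minimizing sequence, reflexivity, weak lower semicontinuity of the convex continuous $\psi$, and weak sequential continuity of $\Phi$. You obtain the last property from the weak-to-strong continuity of $\Phi'$. Your mean-value variant, with $\xi_n=u+t_n(u_n-u)\rightharpoonup u$, is the cleanest version, since it needs no uniform bound on $\Phi'$ over balls.

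What your approach buys is that it is self-contained, and it shows that attainment of the global minimum needs neither the Palais--Smale condition nor \eqref{ass:psi_rho}. Even the appeal to Lemma~\ref{lem:boundedness_of_I} is dispensable. A minimizing sequence is bounded by coercivity whether or not $\inf_E I$ is finite, and $I(u^*)\le\liminf I(u_n)=\inf_E I$ with $I(u^*)\in\mathbb{R}$ already forces the infimum to be finite. As a by-product you also get the weak sequential continuity of $\Phi$, which the paper never states explicitly.

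The paper's route is shorter given the cited machinery, and it keeps everything inside the same abstract framework from \cite{Bereanu2012} that is reused in Proposition~\ref{prop:Mawhin2}. In either approach, the fact that the minimizer is a critical point in Szulkin's sense follows at once from minimality and convexity of $\psi$, as used in Theorem~\ref{thm:existence}.
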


\begin{proof}
    Let $(u_n)$ be a Palais--Smale sequence for $I$. By coercivity, boundedness of $\bigl(I(u_n)\bigr)$ implies boundedness of $(u_n)$. Lemma~\ref{lem:every_(PS)_is_bounded=>I_(PS)} then shows that $I$ satisfies the Palais--Smale condition. Moreover, coercivity yields the existence of $R>0$ such that
    \[
    I(0)<\inf_{u\in E\setminus B_R} I(u)
    \quad \text{and} \quad
    I(0)<\inf_{u\in \partial B_R} I(u).
    \]
    The conclusion now follows from \cite[Proposition~1]{Bereanu2012}.
\end{proof}

The following result links \cite[Theorem~3.2]{Szulkin} with \cite[Proposition~2]{Bereanu2012}, both of which rely on the mountain pass geometry. In particular, it guarantees the existence of two critical points: one corresponding to a local minimum of $I$ on a sufficiently small ball, and another given by the saddle point provided by the Mountain Pass Lemma.

\begin{proposition}
    \label{prop:Mawhin2}
    Assume that \eqref{ass:E}, \eqref{ass:psi}, \eqref{ass:psi_rho}, and \eqref{ass:Phi} hold, and define $I$ by \eqref{equ:Szulkin-type_functional}. Suppose further that $I$ satisfies the Palais--Smale condition and that there exist $r>0$ and $e\in E\setminus \overline{B_r}$ such that
    \begin{equation}
        \label{equ:Mountain_Pass_geometry_for_I}
        I(0) < \inf_{u\in \partial B_r} I(u)
        \quad \text{and} \quad
        I(e)\le I(0).
    \end{equation}
    Then $I$ admits at least one nonzero critical point. If, in addition,
    \begin{equation}
        \label{equ:condition_for_third_critical_point}
        \inf_{u\in \overline{B_r}} I(u) < I(0),
    \end{equation}
    then $I$ possesses at least two nonzero critical points.
\end{proposition}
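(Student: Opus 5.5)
The first part of the statement is Szulkin's Mountain Pass Theorem \cite[Theorem~3.2]{Szulkin}, applied to the path family
\[
\Gamma=\{\gamma\in C([0,1],E):\gamma(0)=0,\ \gamma(1)=e\}
\]
and the minimax level
\[
c:=\inf_{\gamma\in\Gamma}\max_{t\in[0,1]} I(\gamma(t)).
\]
The hypotheses of that theorem in Szulkin's framework are all available: $\psi$ is convex and lower semicontinuous, since it is continuous by \eqref{ass:psi}; $\Phi\in C^1$ by \eqref{ass:Phi}; and the Palais--Smale condition in the sense defined above is assumed. The geometric condition \eqref{equ:Mountain_Pass_geometry_for_I} holds because every path from $0$ to $e\notin\overline{B_r}$ crosses $\partial B_r$. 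Hence
\[
c\ge \inf_{\partial B_r} I > I(0)\ge I(e),
\]
and the theorem yields a critical point $u_1$ with $I(u_1)=c>I(0)$. In particular $u_1\neq 0$. I would also check that $c<\infty$, which is trivial: the straight segment from $0$ to $e$ gives a finite maximum because $I$ is continuous.

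For the second critical point, assume \eqref{equ:condition_for_third_critical_point} and follow \cite[Proposition~2]{Bereanu2012}, which locates a local minimum of $I$ in the open ball $B_r$. Set $m:=\inf_{\overline{B_r}} I$. By Lemma~\ref{lem:boundedness_of_I}, $m>-\infty$, and by assumption $m<I(0)<\inf_{\partial B_r} I$. The functional $I$ is weakly lower semicontinuous on $\overline{B_r}$: $\psi$ is convex and continuous, and $\Phi$ is weakly continuous because $\Phi'$ is weak-to-strong continuous. The latter follows from
\[
\Phi(u_n)-\Phi(u)=\int_0^1\langle\Phi'(u+t(u_n-u)),u_n-u\rangle\,dt,
\]
using the boundedness of $\Phi'$ on bounded sets shown in Lemma~\ref{lem:boundedness_of_I}. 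Since $\overline{B_r}$ is weakly compact in the reflexive space $E$, $m$ is attained at some $u_2\in\overline{B_r}$. Because $m<\inf_{\partial B_r} I$, we have $u_2\in B_r$, so $u_2$ is a local minimizer of $I$ on $E$.

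It remains to show that a local minimizer of $\psi+\Phi$ is a critical point in Szulkin's sense. For $v\in E$ and small $t>0$, the point $u_2+t(v-u_2)$ lies in $B_r$, so by convexity of $\psi$,
\[
0\le I(u_2+t(v-u_2))-I(u_2)\le t\bigl(\psi(v)-\psi(u_2)\bigr)+\Phi(u_2+t(v-u_2))-\Phi(u_2).
\]
Dividing by $t$ and letting $t\to0^+$ gives $\psi(v)-\psi(u_2)+\langle\Phi'(u_2),v-u_2\rangle\ge0$, which is exactly the critical point condition. Finally, $u_2\neq0$ because $I(u_2)=m<I(0)$, and $u_2\neq u_1$ because $I(u_2)<I(0)<I(u_1)$. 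So there are two distinct nonzero critical points.

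The main obstacle is the careful matching of hypotheses with the abstract theorems, in particular that Szulkin's (PS) formulation coincides with the one used here. This is addressed by \cite[Proposition~1.2]{Szulkin}, as in the proof of Lemma~\ref{lem:every_(PS)_is_bounded=>I_(PS)}. The weak lower semicontinuity argument above is routine once \eqref{ass:Phi} is used. Assumption \eqref{ass:psi_rho} is not needed for this proposition beyond what is already built into the standing hypotheses.
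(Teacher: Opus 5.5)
Your proof is correct, and its first half matches the paper's: Szulkin's Mountain Pass Theorem \cite[Theorem~3.2]{Szulkin}, applied after normalizing by $I(0)$. One small remark: matching the Palais--Smale conditions does not need \cite[Proposition~1.2]{Szulkin}. The condition used here, with $(I(u_n))$ merely bounded, is formally stronger than Szulkin's version with $I(u_n)\to c$.

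For the second critical point you take a different route. The paper only proves that $I$ is bounded below on bounded sets (Lemma~\ref{lem:boundedness_of_I}). It then delegates the existence of the local minimizer in $B_r$ to \cite[Proposition~2]{Bereanu2012}, an abstract result stated in terms of (PS) and lower boundedness on bounded sets.

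You instead run the direct method on $\overline{B_r}$:
\begin{itemize}
\item $\psi$ is weakly lower semicontinuous, being continuous and convex.
\item $\Phi$ is weakly sequentially continuous. You correctly derive this from the weak-to-strong continuity of $\Phi'$, using the integral mean value formula, boundedness of $\Phi'$ on bounded sets, and dominated convergence.
\item $\overline{B_r}$ is weakly sequentially compact by reflexivity.
\end{itemize}
The strict inequalities $m<I(0)<\inf_{\partial B_r}I$ then force the minimizer into the open ball. Your convexity argument shows that such an interior minimizer is a critical point in Szulkin's sense.

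Your version is self-contained and does not use (PS) for the second point. It also makes distinctness explicit through $I(u_2)<I(0)<I(u_1)$, which the paper leaves inside the citation. The paper's version is shorter, and it rests on a tool that does not need weak compactness or weak continuity of $\Phi$.

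Your remark that \eqref{ass:psi_rho} is not really needed is also right. In the paper's proof it enters only through Lemma~\ref{lem:boundedness_of_I}. A continuous convex function is bounded below on bounded sets anyway, by any affine minorant $v\mapsto\psi(u_0)+\langle\eta,v-u_0\rangle$ with $\eta\in\partial\psi(u_0)$.
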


\begin{proof}
    Since $I$ satisfies the Palais--Smale condition, Szulkin’s version of the Mountain Pass Lemma applies; see \cite[Theorem~3.2]{Szulkin}. This yields the existence of a nonzero critical point. If both \eqref{equ:Mountain_Pass_geometry_for_I} and \eqref{equ:condition_for_third_critical_point} hold, then Lemma~\ref{lem:boundedness_of_I} allows us to apply \cite[Proposition~2]{Bereanu2012}, which completes the proof.
\end{proof}

\subsection{Sierpi\'nski gasket}

Let $\boldsymbol{p}_1$, $\boldsymbol{p}_2$, and $\boldsymbol{p}_3$ be the vertices of an arbitrary equilateral triangle in $\mathbb{R}^2$. Each point $\boldsymbol{p}_i$, $i\in\{1,2,3\}$, is the unique fixed point of the affine contraction $\boldsymbol{F}_i\colon \mathbb{R}^2\to\mathbb{R}^2$ defined by
\[
\boldsymbol{F}_i(\boldsymbol{x})=\frac{\boldsymbol{x}+\boldsymbol{p}_i}{2}
\quad \text{for all } \boldsymbol{x}\in\mathbb{R}^2.
\]
Hence, $\mathbb{R}^2$ equipped with the Euclidean norm, together with the mappings $\boldsymbol{F}_1$, $\boldsymbol{F}_2$, and $\boldsymbol{F}_3$, constitutes an iterated function system; see \cite[Section~3.7]{Barnsley}. Let $\mathcal{K}(\mathbb{R}^2)$ denote the family of all compact subsets of $\mathbb{R}^2$, endowed with the Hausdorff metric
\[
h(X,Y)
=
\max\Bigl\{
\max_{\boldsymbol{x}\in X}\min_{\boldsymbol{y}\in Y}|\boldsymbol{x}-\boldsymbol{y}|,
\max_{\boldsymbol{y}\in Y}\min_{\boldsymbol{x}\in X}|\boldsymbol{x}-\boldsymbol{y}|
\Bigr\}
\quad\text{for all } X,Y\in\mathcal{K}(\mathbb{R}^2).
\]
Define the mapping $\mathcal{F}\colon \mathcal{K}(\mathbb{R}^2)\to\mathcal{K}(\mathbb{R}^2)$ by
\[
\mathcal{F}(X)=\boldsymbol{F}_1(X)\cup \boldsymbol{F}_2(X)\cup \boldsymbol{F}_3(X)\quad \text{for every } X\in\mathcal{K}(\mathbb{R}^2).
\]
Then $\mathcal{F}$ is a contraction on $\bigl(\mathcal{K}(\mathbb{R}^2),h\bigr)$ and therefore admits a unique fixed point, known as the \emph{Sierpi\'nski gasket}, which we denote by $V$. Setting
\[
V_0=\{\boldsymbol{p}_1,\boldsymbol{p}_2,\boldsymbol{p}_3\},
\]
and defining recursively
\[
V_{n+1}=\mathcal{F}(V_n) \quad\text{for every } n\in\mathbb{N}_0,
\]
we obtain $h(V_n,V)\to 0$ as $n\to\infty$. This construction of $V$ was used to define the Laplacian in \cite{Kigami1989} and \cite{strichartzwong2004}. As in \cite{Falconer1999}, we equip $V$ with the normalized Hausdorff measure $\mu$, so that $\mu(V)=1$.

We introduce two standard notational conventions concerning points in $V$. For simplicity, we identify ordered $n$-tuples with entries in $\{1,2,3\}$ with words of length $n$ over the same alphabet. Accordingly, we set ${\Sigma_n=\{1,2,3\}^n}$ and $\Sigma_\infty=\bigcup_{n\in\mathbb{N}_+}\Sigma_n$. For $\omega=(\omega_1,\dots,\omega_n)\in\Sigma_n$ we define
\[
\boldsymbol{F}_\omega
=
\boldsymbol{F}_{\omega_1}\circ\cdots\circ \boldsymbol{F}_{\omega_n},
\]
and
\[
\boldsymbol{p}_\omega
=
\boldsymbol{F}_{\omega_1\ldots\omega_{n-1}}(\boldsymbol{p}_{\omega_n}).
\]
Consequently,
\[
    V_{n-1}=\{\boldsymbol{p}_\omega : \omega\in\Sigma_n\} \quad \text{for every } n\in\mathbb{N}_+.
\]

We now introduce the Laplace operator on $V$, following \cite{Kigami1989}. The starting point is the observation that for every $u\in C(V)$ one has
\begin{equation}
    \label{equ:fundamental_inequality}
    \sum_{1\le i<j\le 3}\bigl|u(\boldsymbol{p}_i)-u(\boldsymbol{p}_j)\bigr|^2
    \le \tfrac{5}{3}\sum_{k=1}^3\sum_{1\le i<j\le 3}
    \bigl|u(\boldsymbol{p}_{ki})-u(\boldsymbol{p}_{kj})\bigr|^2.
\end{equation}
This inequality is fundamental in the construction of the operator $\Delta$ on $V$. Informally, it states that the energy determined by the values of $u$ on $V_0$ does not exceed the energy determined on $V_1$, provided the latter is rescaled by the factor $\tfrac{5}{3}$. As a consequence, one obtains
\begin{equation}
    \label{equ:condition_E^(n)_is_increasing}
    \sum_{1\le i<j\le 3}\bigl|u(\boldsymbol{p}_{\omega i})-u(\boldsymbol{p}_{\omega j})\bigr|^2
    \le \tfrac{5}{3}\sum_{k=1}^3\sum_{1\le i<j\le 3}
    \bigl|u(\boldsymbol{p}_{\omega ki})-u(\boldsymbol{p}_{\omega kj})\bigr|^2,
\end{equation}
for every $\omega\in\Sigma_\infty$ and arbitrary $u\in C(V)$, see \cite[Section~1.3]{Strichartz2006} for details. This leads to the definition of the energy functional
\begin{equation}
    \label{equ:functional_E}
    \mathcal{E}(u) = \lim_{n\to\infty} \left(\tfrac{5}{3}\right)^n \sum_{\omega\in\Sigma_n} \sum_{1\le i<j\le 3} \bigl|u(\boldsymbol{p}_{\omega i})-u(\boldsymbol{p}_{\omega j})\bigr|^2.
\end{equation}
The limit (finite or infinite) exists, since the sequence is nondecreasing by \eqref{equ:condition_E^(n)_is_increasing}. Note that in defining $\mathcal{E}(u)$ we only use the values of $u$ on
\[
V_\infty=\bigcup_{n\in\mathbb{N}_+}V_n
=
\{\boldsymbol{p}_\omega : \omega\in\Sigma_\infty\}.
\]
It is therefore natural to restrict the definition of $\mathcal{E}$ to continuous functions. We define
\[
H^1(V)=\{u\in C(V): \mathcal{E}(u)<\infty\},
\]
and equip this space with the norm
\[
\|u\|_{\mathcal{E}}
=
\left(\mathcal{E}(u)+\int_V |u(\boldsymbol{x})|^2\,d\mu(\boldsymbol{x})\right)^{1/2}.
\]
As an analogue of the space of functions vanishing on the boundary, we introduce
\[
H_0^1(V)
=
\{u\in H^1(V): u(\boldsymbol{x})=0 \text{ for all }\boldsymbol{x}\in V_0\},
\]
which, endowed with the norm
\[
\|u\|_{H_0^1(V)}=\sqrt{\mathcal{E}(u)},
\]
is a Hilbert space; see \cite{Hu2004}. Within this framework, the Laplace operator is defined in the weak sense as $-\Delta\colon H_0^1(V)\to \bigl(H_0^1(V)\bigr)^*$ given by $-\Delta = \tfrac{1}{2} \mathcal{E}'$.
\par
This definition coincides with Kigami’s weak Laplacian on the Sierpi\'nski gasket \cite{Kigami2001}; see also \cite{Strichartz2006}. It is consistent with the classical definition on a bounded open set $\Omega\subset\mathbb{R}^n$, where $-\Delta=\operatorname{div}\nabla$ is realized as the G\^ateaux derivative of
\[
H_0^1(\Omega)\ni v\longmapsto \tfrac12\int_\Omega |\nabla v(\boldsymbol{x})|^2\,d\boldsymbol{x}.
\]
It can be shown (see \cite{Falconer1999}) that, as in the classical case, solutions to
\[
\begin{cases}
-\Delta u(\boldsymbol{x})=f(\boldsymbol{x}), & \boldsymbol{x}\in V\setminus V_0,\\
u(\boldsymbol{x})=0, & \boldsymbol{x}\in V_0,
\end{cases}
\]
with $f\in L^2(V)$ are precisely the critical points of the functional
\[
H_0^1(V)\ni v\longmapsto
\tfrac12 \mathcal{E}(v)
-
\int_V f(\boldsymbol{x})v(\boldsymbol{x})\,d\mu(\boldsymbol{x}).
\]

Another analogy with the classical setting is provided by the following Sobolev-type embedding.

\begin{lemma}[{\cite[Proposition~4.4]{Fukushima1992}}]
    \label{lemma:embedding}
    The embedding $H^1(V)\hookrightarrow C(V)$ is compact. Moreover,
    \[
    \|u\|_{\infty}\le 9\,\|u\|_{H_0^1(V)} \quad \text{for all } u\in H_0^1(V).
    \]
\end{lemma}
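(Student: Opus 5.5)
The plan is to prove the uniform bound directly from the discrete energies, via a chain of self-similar estimates, and then obtain compactness from Arzel\`a--Ascoli. For a cell $\omega\in\Sigma_n$, the values of $u$ at the three vertices of the cell $\boldsymbol F_\omega(V)$ are controlled by the level-$n$ energy on that cell. By \eqref{equ:condition_E^(n)_is_increasing} and the definition \eqref{equ:functional_E}, each single term satisfies
\[
\left(\tfrac53\right)^n\sum_{1\le i<j\le 3}\bigl|u(\boldsymbol p_{\omega i})-u(\boldsymbol p_{\omega j})\bigr|^2\le \mathcal E(u),
\]
since the partial sums are nondecreasing and every term is nonnegative. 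Hence
\[
|u(\boldsymbol p_{\omega i})-u(\boldsymbol p_{\omega j})|\le (3/5)^{n/2}\,\mathcal E(u)^{1/2}
\]
for neighbouring points of $V_n$.

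Next I will estimate $|u(\boldsymbol x)-u(\boldsymbol y)|$ for arbitrary $\boldsymbol x,\boldsymbol y\in V_\infty$. For each point I build a chain of junction points: $\boldsymbol x$ lies in a nested sequence of cells $\boldsymbol F_{\omega_1\cdots\omega_k}(V)$, and consecutive vertices chosen in these cells differ by at most $(3/5)^{k/2}\mathcal E(u)^{1/2}$. Summing the resulting geometric series shows that, for any $\boldsymbol x$ in a level-$m$ cell with vertex $\boldsymbol q$,
\[
|u(\boldsymbol x)-u(\boldsymbol q)|\le \sum_{k\ge m}(3/5)^{k/2}\mathcal E(u)^{1/2}=C\,(3/5)^{m/2}\mathcal E(u)^{1/2},
\qquad C=\bigl(1-\sqrt{3/5}\bigr)^{-1}.
\]
Since $u$ is continuous and $V_\infty$ is dense in $V$, this estimate extends to all of $V$. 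It gives a Hölder-type modulus of continuity: if $|\boldsymbol x-\boldsymbol y|$ is comparable to $2^{-m}$, then the two points lie in the same or adjacent level-$m$ cells, and
\[
|u(\boldsymbol x)-u(\boldsymbol y)|\le C'\,|\boldsymbol x-\boldsymbol y|^{\beta}\,\mathcal E(u)^{1/2},
\qquad \beta=\tfrac{\log(5/3)}{2\log 2}.
\]

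For $u\in H_0^1(V)$ I take $m=0$ and $\boldsymbol q\in V_0$, where $u(\boldsymbol q)=0$. This gives $\|u\|_\infty\le C\,\mathcal E(u)^{1/2}$ with $C=(1-\sqrt{0.6})^{-1}\approx 4.44$. That is at most $9$, which leaves room for the cruder chaining needed when adjacent cells are involved.

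For $u\in H^1(V)$ I bound $|u(\boldsymbol p_1)|$ by combining the oscillation bound $\operatorname{osc}u\le 2C\,\mathcal E(u)^{1/2}$ with $\int_V|u|^2\,d\mu$, using $\mu(V)=1$. This yields $\|u\|_\infty\le c\|u\|_{\mathcal E}$. A bounded set in $H^1(V)$ is therefore uniformly bounded and uniformly equicontinuous by the Hölder estimate, so Arzel\`a--Ascoli gives compactness of the embedding.

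The main obstacle is the chaining step. One has to organise the path from $\boldsymbol x$ to a boundary vertex so that at each level only one edge of the corresponding cell is used; this is what keeps the constant geometric and explicit. The adjacency argument for the modulus of continuity is the second delicate point, since two nearby points may lie in different level-$m$ cells.
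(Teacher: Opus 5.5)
Your proof is correct, but it takes a different route from the paper: the paper gives no argument and simply imports the statement from \cite[Proposition~4.4]{Fukushima1992}. Your chaining argument is self-contained. It uses only the monotonicity \eqref{equ:condition_E^(n)_is_increasing} built into the definition of $\mathcal E$, together with Arzel\`a--Ascoli, and it gives a better constant than $9$.

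The step you call the main obstacle is settled by an explicit choice. For $\boldsymbol x=\boldsymbol p_{\omega_1\cdots\omega_{N+1}}\in V_N$, put $\boldsymbol q_k=\boldsymbol p_{\omega_1\cdots\omega_{k+1}}$ for $0\le k\le N$. Since $\boldsymbol F_{\omega_{k+1}}(\boldsymbol p_{\omega_{k+1}})=\boldsymbol p_{\omega_{k+1}}$, both $\boldsymbol q_k$ and $\boldsymbol q_{k+1}$ are vertices of the single cell $\boldsymbol F_{\omega_1\cdots\omega_{k+1}}(V)$. Each step therefore costs at most $(3/5)^{(k+1)/2}\mathcal E(u)^{1/2}$. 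Because $u(\boldsymbol q_0)=0$ for $u\in H^1_0(V)$, you even get $\|u\|_\infty\le \frac{\sqrt{3/5}}{1-\sqrt{3/5}}\,\|u\|_{H^1_0}\approx 3.44\,\|u\|_{H^1_0}$.

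Your second ``delicate point'' can also be bypassed, since compactness needs only uniform equicontinuity, not a H\"older exponent. Distinct level-$m$ cells meet in at most one common vertex, and the finitely many disjoint pairs are at some positive distance $\delta_m$. So $|\boldsymbol x-\boldsymbol y|<\delta_m$ already gives $|u(\boldsymbol x)-u(\boldsymbol y)|\le 2C(3/5)^{m/2}\mathcal E(u)^{1/2}$ without any quantitative geometry. Your remark that the slack below $9$ ``leaves room'' for adjacent cells is beside the point: the sup bound on $H^1_0(V)$ never involves adjacency.

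The citation buys brevity. Your route buys transparency and a sharper constant, which would directly weaken hypothesis \eqref{ass:f_R}: the number $162=2\cdot 9^2$ there could be replaced by $2\bigl(\sqrt{3/5}/(1-\sqrt{3/5})\bigr)^2\approx 23.6$.
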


Since $C(V)$ embeds continuously into $L^2(V)$, it follows that $H_0^1(V)$ embeds compactly into $L^2(V)$. Let $\lambda_1$ denote the first eigenvalue of the Laplace operator with Dirichlet boundary conditions. Its reciprocal provides the optimal constant in the Poincar\'e inequality
\begin{equation}
    \label{equ:Poincare_inequality}
    \|u\|_{L^2}^2
    \le \lambda_1^{-1}\|u\|_{H_0^1}^2
    \quad \text{for all } u\in H_0^1(V).
\end{equation}
Unlike the one-dimensional case, no explicit formula for $\lambda_1$ is known. Numerical approximations based on finite element methods yield $\lambda_1\approx 16.816$; see \cite[Table~5.1]{GibbonsRajStrichartz}.

\subsection{Construction of a weighted Laplacian on the Sierpi{\'n}ski gasket}
\label{subsec:Construction_of_Delta_a}

We begin by recalling the definition of the weighted Laplacian $-\Delta_a$ in the classical setting, that is, on an open and bounded subset $\Omega$ of Euclidean space. This serves as a point of reference for the construction that follows. Building on the resulting analogies--while emphasizing the essential differences--we then proceed to define the operator $-\Delta_a$ on the Sierpiński gasket.
\par
Let us recall that the Laplace operator $-\Delta$ in its weak formulation--namely, as a mapping from $H^1_0(\Omega)$ into the dual space $H^{-1}(\Omega)$--is defined as the G{\^a}teaux derivative of the functional
\begin{equation*}\label{equ:potential_of_Delta}
    H^1_0(\Omega) \ni v \longmapsto 
    \tfrac{1}{2}\int_\Omega |\nabla v(\boldsymbol{x})|^2\, d\boldsymbol{x}.
\end{equation*}
Given a function $a\colon \Omega \to \mathbb{R}$ that is measurable, bounded, and bounded away from zero, we obtain the corresponding potential for the operator $-\Delta_a$ in the form
\begin{equation*}\label{equ:potential_of_Delta_a}
    H^1_0(\Omega) \ni v \longmapsto 
    \tfrac{1}{2}\int_\Omega a(\boldsymbol{x})\,|\nabla v(\boldsymbol{x})|^2\, d\boldsymbol{x}.
\end{equation*}
In the case of the Sierpiński gasket, the potential of the Laplacian is given by \(\tfrac{1}{2}\mathcal{E}\), where the energy form \(\mathcal{E}\) is defined by \eqref{equ:functional_E}. It is important to note that the definition of \(\mathcal{E}(u)\) relies exclusively on the values of \(u\) on the set \(V_\infty\), which has measure zero. Consequently, one cannot unambiguously define the operator \(-\Delta_a\) using an expression of the form
\begin{equation}
    \label{equ:functional_E_a_for_continuous}
    \mathcal{E}_a(u) = \lim_{n\to \infty} \left(\tfrac{5}{3}\right)^{n} \sum_{\omega \in \Sigma_n} \sum_{1 \leq i < j \leq 3} \tfrac{a(\boldsymbol{p}_{\omega i}) + a(\boldsymbol{p}_{\omega j})}{2}\big|u(\boldsymbol{p}_{\omega i})-u(\boldsymbol{p}_{\omega j})\big|^2.
\end{equation}
for an arbitrary function \(a\in L^\infty(V)\). However, it turns out that the above definition can indeed be employed in the case of continuous functions, since \(V_\infty\) is dense in \(V\). Additional properties of the functional \(\mathcal{E}_a\) are described in the following 
\begin{lemma}
    \label{lem:E_a_for_continuous_a}
    Take a continuous function $a\colon V\longrightarrow (0,\infty)$. Then, for every $u\in H^1_0(V)$, there exists a limit
    \begin{equation}
        \label{equ:functional_E_a_for_continuous_2}
        \mathcal{E}_a(u) = \lim_{n\to \infty} \left(\tfrac{5}{3}\right)^{n} \sum_{\omega \in \Sigma_n} \sum_{1 \leq i < j \leq 3} \tfrac{a(\boldsymbol{p}_{\omega i}) + a(\boldsymbol{p}_{\omega j})}{2}\big|u(\boldsymbol{p}_{\omega i})-u(\boldsymbol{p}_{\omega j})\big|^2.
    \end{equation}
    Moreover $\mathcal{E}_a$, defined above, is of class $C^1$ and $(u,v) \longmapsto \langle\mathcal{E}_a'(u),v\rangle$ is an inner product on $H^1_0(V)$, equivalent with the standard one.
\end{lemma}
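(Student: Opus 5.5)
Write $S_n^a(u)$ for the $n$-th approximant in \eqref{equ:functional_E_a_for_continuous_2} and $S_n(u)$ for the unweighted one in \eqref{equ:functional_E}. Set $m=\min_V a>0$ and $M=\max_V a$; both are attained, by compactness of $V$ and continuity of $a$. The plan has three stages: show that $(S_n^a(u))$ is Cauchy, build the derivative from a polarized form, and compare with $\mathcal{E}$.

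\emph{Existence of the limit.} The weighted sequence need not be monotone, so I will compare it with a locally frozen weight. Fix $k$ and, for each $\tau\in\Sigma_k$, put $a_\tau=a(\boldsymbol{p}_{\tau 1})$. For $n\ge k$, every $\omega\in\Sigma_n$ has the form $\omega=\tau\sigma$. Uniform continuity of $a$ together with $\operatorname{diam}\boldsymbol{F}_\tau(V)=2^{-k}\operatorname{diam}V$ gives
\[
\Bigl|\tfrac{a(\boldsymbol{p}_{\omega i})+a(\boldsymbol{p}_{\omega j})}{2}-a_\tau\Bigr|\le \varepsilon_k\to0 .
\]
Hence
\[
\Bigl|S_n^a(u)-\sum_{\tau\in\Sigma_k}a_\tau\, T_{n,\tau}(u)\Bigr|\le \varepsilon_k S_n(u)\le \varepsilon_k\mathcal{E}(u),
\]
where $T_{n,\tau}(u)$ is the part of $S_n(u)$ coming from the cell $\boldsymbol{F}_\tau(V)$. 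By \eqref{equ:condition_E^(n)_is_increasing}, each $T_{n,\tau}(u)$ is nondecreasing in $n$ and bounded by $\mathcal{E}(u)$, so it converges. Therefore
\[
\limsup_n S_n^a(u)-\liminf_n S_n^a(u)\le 2\varepsilon_k\mathcal{E}(u).
\]
Letting $k\to\infty$ shows that the limit exists. Passing to the limit in $mS_n\le S_n^a\le MS_n$ also gives
\[
m\,\mathcal{E}(u)\le\mathcal{E}_a(u)\le M\,\mathcal{E}(u).
\]

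\emph{Bilinear structure and $C^1$.} I will apply the same argument to the polarized sums
\[
B_n(u,v)=\left(\tfrac53\right)^n\sum_{\omega\in\Sigma_n}\sum_{i<j}\tfrac{a(\boldsymbol{p}_{\omega i})+a(\boldsymbol{p}_{\omega j})}{2}\,\bigl(u(\boldsymbol{p}_{\omega i})-u(\boldsymbol{p}_{\omega j})\bigr)\bigl(v(\boldsymbol{p}_{\omega i})-v(\boldsymbol{p}_{\omega j})\bigr).
\]
Alternatively, use polarization directly: $B_n(u,v)=\tfrac14\bigl(S_n^a(u+v)-S_n^a(u-v)\bigr)$, so $B(u,v)=\lim_n B_n(u,v)$ exists. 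Each $B_n$ is symmetric, bilinear and positive semidefinite, and these properties pass to the limit. Cauchy--Schwarz for $B_n$ and the bound above give
\[
|B(u,v)|\le M\|u\|_{H_0^1}\|v\|_{H_0^1},
\qquad B(u,u)=\mathcal{E}_a(u)\ge m\|u\|_{H_0^1}^2 .
\]
Thus $B$ is a bounded, coercive inner product equivalent to the standard one. Since $\mathcal{E}_a(u)=B(u,u)$ is a bounded quadratic form, $\mathcal{E}_a$ is $C^1$ with $\langle\mathcal{E}_a'(u),v\rangle=2B(u,v)$. The continuity of $u\mapsto 2B(u,\cdot)$ follows from the boundedness of $B$.

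The map $(u,v)\mapsto\langle\mathcal{E}_a'(u),v\rangle=2B(u,v)$ is therefore an inner product equivalent to the standard one; the factor $2$ does not affect this. The main obstacle is the existence of the limit, because the lack of monotonicity forces the localized freezing argument above. Its key input is that the cellwise sums $T_{n,\tau}$ are monotone, which follows from \eqref{equ:condition_E^(n)_is_increasing} applied with $\omega=\tau\sigma$.
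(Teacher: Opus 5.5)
Your proof is correct. The existence-of-the-limit step is organized differently from the paper's, and the second half is more explicit than the paper's.

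\textbf{Existence of the limit.} The paper replaces the averaged weights at level $n$ by the cell minima $a_\omega=\min_{\boldsymbol{F}_\omega V}a$. Because $a_\omega\le a_{\omega k}$, the cell refinement inequality makes the minorant $\bigl(\tfrac53\bigr)^n\sum_{\omega\in\Sigma_n}\sum_{i<j}a_\omega|u(\boldsymbol{p}_{\omega i})-u(\boldsymbol{p}_{\omega j})|^2$ itself nondecreasing in $n$ and bounded by $a_+\mathcal{E}(u)$. The remainder is at most $\sup_{\omega\in\Sigma_n,\,i<j}\bigl|\tfrac{a(\boldsymbol{p}_{\omega i})+a(\boldsymbol{p}_{\omega j})}{2}-a_\omega\bigr|\,\mathcal{E}(u)\to0$. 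So a single monotone sequence plus a vanishing error suffices, with no double limit.

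You instead freeze the weight at a coarse level $k$, use monotonicity of the unweighted cellwise energies $T_{n,\tau}$, and finish with a $\limsup$--$\liminf$ squeeze as $k\to\infty$. The paper's trick is shorter. Yours does not need weights that are monotone under refinement. It also exhibits $\mathcal{E}_a(u)=\lim_k\sum_{\tau\in\Sigma_k}a_\tau\lim_nT_{n,\tau}(u)$ as a limit of Riemann-type sums of $a$ against the cell energies of $u$, which is the natural energy-measure picture.

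\textbf{Bilinear structure and $C^1$.} The paper only passes to the limit in the parallelogram law and then says the remaining properties follow. It leaves implicit the polarization (Jordan--von Neumann) step, the boundedness of the resulting form, and the $C^1$ claim. You polarize at the finite level and carry bilinearity and Cauchy--Schwarz to the limit. This gives $|B(u,v)|\le a_+\|u\|_{H^1_0}\|v\|_{H^1_0}$ and $\langle\mathcal{E}_a'(u),v\rangle=2B(u,v)$ explicitly, filling in exactly what the paper skips.
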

\begin{proof}
    Let us denote 
    \begin{equation*}
        a_\omega = \min_{\boldsymbol{x} \in F_\omega V} a(\boldsymbol{x}), \quad a_- = \min_{\boldsymbol{x}\in V} a(\boldsymbol{x})\quad \text{and}\quad a_+ = \max_{\boldsymbol{x}\in V} a(\boldsymbol{x}).
    \end{equation*}
    Then we immediately get
    \begin{equation}
        \label{equ:direct_inequalities}
        {a_-}\mathcal{E}(u) \leq \mathcal{E}_a(u) \leq {a_+}\mathcal{E}(u)\quad \text{for every }u\in H^1_0(V).
    \end{equation}
    Moreover, for any word $\omega \in \Sigma_\infty$ and every $k \in \{1,2,3\}$, we clearly have $a_{\omega} \leq a_{\omega k}$ since $\boldsymbol{F}_{\omega k}V \subset \boldsymbol{F}_\omega V$. Using condition \eqref{equ:condition_E^(n)_is_increasing}, for any $n\in \mathbb{N}$, we have
    \begin{equation}
        \label{equ:non-decreasing_E_a}
        \begin{split}
            \left(\tfrac{5}{3}\right)^{n} \sum_{\omega \in \Sigma_n} \sum_{1 \leq i < j \leq 3} a_\omega \big|u(\boldsymbol{p}_{\omega i})-u(\boldsymbol{p}_{\omega j})\big|^2 & \leq \left(\tfrac{5}{3}\right)^{n} \sum_{\omega \in \Sigma_n} \tfrac{5}{3} \sum_{k = 1}^3 \sum_{1 \leq i < j \leq 3} a_\omega \big|u(\boldsymbol{p}_{\omega k i})-u(\boldsymbol{p}_{\omega k j})\big|^2\\
            & \leq \left(\tfrac{5}{3}\right)^{n + 1} \sum_{\omega \in \Sigma_n} \sum_{k = 1}^3 \sum_{1 \leq i < j \leq 3} a_{\omega k} \big|u(\boldsymbol{p}_{\omega k i})-u(\boldsymbol{p}_{\omega k j})\big|^2\\
            & = \left(\tfrac{5}{3}\right)^{n + 1} \sum_{\sigma \in \Sigma_{n+1}} \sum_{1 \leq i < j \leq 3} a_{\sigma} \big|u(\boldsymbol{p}_{\sigma i})-u(\boldsymbol{p}_{\sigma j})\big|^2
        \end{split}
    \end{equation}
    The last equality is achieved by taking $\sigma = \omega k$. By the following inequality
    \begin{equation*}
        \left|a_\omega - a(\boldsymbol{p}_{\omega i})\right| \leq \sup\left\{|a(\boldsymbol{x}) - a(\boldsymbol{y})| : \boldsymbol{x}, \boldsymbol{y}\in V,\ |\boldsymbol{x} - \boldsymbol{y}| \leq \sqrt{3} |\boldsymbol{p}_1 - \boldsymbol{p}_2|2^{-n - 1}\right\}
    \end{equation*}
    and by uniform continuity of $a$, we get 
    \begin{equation*}
        \sup_{\substack{\omega \in \Sigma_n \\ 1\leq i < j \leq 3}}\left|\tfrac{a(\boldsymbol{p}_{\omega i}) + a(\boldsymbol{p}_{\omega j})}{2} - a_\omega\right| \to 0 \quad \text{as }n\to \infty.
    \end{equation*}
    Hence we also get that
    \begin{equation*}
        \left|\left(\tfrac{5}{3}\right)^{n} \sum_{\omega \in \Sigma_n} \sum_{1 \leq i < j \leq 3} \left(\tfrac{a(\boldsymbol{p}_{\omega i}) + a(\boldsymbol{p}_{\omega j})}{2} - a_\omega\right)\big|u(\boldsymbol{p}_{\omega i})-u(\boldsymbol{p}_{\omega j})\big|^2\right|
    \end{equation*}
    is bounded above by a term
    \begin{equation*}
        \sup_{\substack{\omega \in \Sigma_n \\ 1\leq i < j \leq 3}}\left|\tfrac{a(\boldsymbol{p}_{\omega i}) + a(\boldsymbol{p}_{\omega j})}{2} - a_\omega\right| \mathcal{E}(u)
    \end{equation*}
    that converges to $0$ as $n\to \infty$ and consequently we can decompose 
    \begin{equation}
        \label{equ:n-th_part_of_E_a}
        \left(\tfrac{5}{3}\right)^{n} \sum_{\omega \in \Sigma_n} \sum_{1 \leq i < j \leq 3} \tfrac{a(\boldsymbol{p}_{\omega i}) + a(\boldsymbol{p}_{\omega j})}{2}\big|u(\boldsymbol{p}_{\omega i})-u(\boldsymbol{p}_{\omega j})\big|^2 
    \end{equation}
    as a sum of two terms:
    \begin{equation*}
        \begin{split}
            \left(\tfrac{5}{3}\right)^{n} \sum_{\omega \in \Sigma_n} \sum_{1 \leq i < j \leq 3} & a_\omega\big|u(\boldsymbol{p}_{\omega i})-u(\boldsymbol{p}_{\omega j})\big|^2\\
            & + \left(\tfrac{5}{3}\right)^{n} \sum_{\omega \in \Sigma_n} \sum_{1 \leq i < j \leq 3} \left(\tfrac{a(\boldsymbol{p}_{\omega i}) + a(\boldsymbol{p}_{\omega j})}{2} - a_\omega\right)\big|u(\boldsymbol{p}_{\omega i})-u(\boldsymbol{p}_{\omega j})\big|^2 
        \end{split}
    \end{equation*}
    where one is non-decreasing by \eqref{equ:non-decreasing_E_a} and clearly bounded by $a_+\mathcal{E}(u)$ The second one converges to $0$. This shows that the desired limit exists. Hence one can pass to the limit with the parallelogram law described for \eqref{equ:n-th_part_of_E_a} to get 
    \begin{equation*}
        \mathcal{E}_a(u - v) + \mathcal{E}_a(u + v) = 2\mathcal{E}_a(u) + 2 \mathcal{E}_a(v).
    \end{equation*}
    Now it is clear that $\mathcal{E}_a$ is a square of a norm introducing an inner product on $H^1_0(V)$, that is equivalent with the standard one by \eqref{equ:direct_inequalities}. Consequently all desired properties follows.
\end{proof}
To allow discontinuous weight \(a\), the construction of the Laplacian on \(V\) forces us to define \(a\) on the set \(V_\infty\). Accordingly, we impose the following assumption
\begin{equation}
    \label{ass:a}
    \tag{$a$}
    \boxed{
    \begin{tabular}{l}
        \text{function $a\colon V_\infty \longrightarrow (0, \infty)$ satisfy} \\
        \qquad $0 < {a_-} = \inf\limits_{\boldsymbol{x}\in V_\infty} a(\boldsymbol{x}) \leq \sup\limits_{\boldsymbol{x}\in V_\infty} a(\boldsymbol{x}) = {a_+} < \infty$
    \end{tabular}
    }
\end{equation}
and define $\mathcal{E}_a\colon H^1_0(V)\longrightarrow \mathbb{R}$, as a natural generalization of \eqref{equ:potential_of_Delta_a}, by the formula
\begin{equation}
    \label{equ:functional_E_a}
    \mathcal{E}_a(u) = \limsup_{n\to \infty}\left(\tfrac{5}{3}\right)^{n} \sum_{\omega \in \Sigma_n} \sum_{1 \leq i < j \leq 3} \tfrac{a(\boldsymbol{p}_{\omega i}) + a(\boldsymbol{p}_{\omega j})}{2}\big|u(\boldsymbol{p}_{\omega i})-u(\boldsymbol{p}_{\omega j})\big|^2
\end{equation}
for every $u\in H^1_0(V)$. Note that, for every continuous function $a\colon V \longrightarrow (0,\infty)$, the restriction $a|_{V_\infty}$ satisfies \eqref{ass:a}. Therefore, the functional $\mathcal{E}_a$ given by \eqref{equ:functional_E_a} generalizes the weak Laplacian described in Lemma~\ref{lem:E_a_for_continuous_a}. In general, it is not known whether $\mathcal{E}_a$ is of class $C^1$. However, the estimates in \eqref{equ:direct_inequalities} still hold. Some further properties of $\mathcal{E}_a$ are described in Proposition \ref{prop:E_a_satisfies_psi}.
\begin{proposition}
    \label{prop:E_a_satisfies_psi}
    Assume that \eqref{ass:a} holds. Then the functional $\psi\colon H_0^1(V)\longrightarrow\mathbb{R}$ defined by
    \begin{equation}
    \label{equ:psi}
    \psi(u) = \tfrac{1}{2}\mathcal{E}_a(u) \quad \text{for every }u\in H^1_0(V)
    \end{equation}
    with $\mathcal{E}_a$ given by \eqref{equ:functional_E_a}, satisfies \eqref{ass:psi} and \eqref{ass:psi_rho} with $\rho(x) = {a_-} x$.
\end{proposition}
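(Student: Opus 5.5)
The plan is to treat the three requirements separately: first convexity and finiteness of $\psi$, then continuity, and finally the uniform-convexity inequality \eqref{ass:psi_rho}. Throughout I write
\[
Q_n(u)=\left(\tfrac{5}{3}\right)^{n}\sum_{\omega\in\Sigma_n}\sum_{1\le i<j\le 3}\tfrac{a(\boldsymbol{p}_{\omega i})+a(\boldsymbol{p}_{\omega j})}{2}\bigl|u(\boldsymbol{p}_{\omega i})-u(\boldsymbol{p}_{\omega j})\bigr|^2,
\qquad
E_n(u)=\left(\tfrac{5}{3}\right)^{n}\sum_{\omega\in\Sigma_n}\sum_{1\le i<j\le 3}\bigl|u(\boldsymbol{p}_{\omega i})-u(\boldsymbol{p}_{\omega j})\bigr|^2 .
\]
Thus $\mathcal{E}_a=\limsup_n Q_n$ and $\mathcal{E}=\lim_n E_n$. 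Each $Q_n$ is a nonnegative quadratic form, and
\[
a_-E_n\le Q_n\le a_+E_n\le a_+\mathcal{E}.
\]
The last inequality uses that $(E_n)$ is nondecreasing by \eqref{equ:condition_E^(n)_is_increasing}.

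\textbf{Finiteness and convexity.} Since $Q_n\le a_+\mathcal{E}$, the functional $\mathcal{E}_a$ is finite on $H^1_0(V)$, and the bounds \eqref{equ:direct_inequalities} follow. Convexity holds because $\sqrt{Q_n}$ is a seminorm for every $n$. A $\limsup$ of seminorms is subadditive and positively homogeneous, so $\sqrt{\mathcal{E}_a}=\limsup_n\sqrt{Q_n}$ is a seminorm. Hence $\mathcal{E}_a$, being the square of a nonnegative convex function, is convex.

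\textbf{Continuity.} I would use the seminorm property directly. For $u,v\in H^1_0(V)$,
\[
\bigl|\sqrt{\mathcal{E}_a(u)}-\sqrt{\mathcal{E}_a(v)}\bigr|\le\sqrt{\mathcal{E}_a(u-v)}\le\sqrt{a_+}\,\|u-v\|_{H^1_0(V)} .
\]
So $\sqrt{\mathcal{E}_a}$ is Lipschitz, and $\psi$ is locally Lipschitz, in particular continuous. This gives \eqref{ass:psi}.

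\textbf{The inequality \eqref{ass:psi_rho}.} Fix $u,v$, put $w=v-u$, and take $\eta\in\partial\psi(u)$. The first key step is a one-sided parallelogram inequality. Each $Q_n$ satisfies
\[
Q_n(u)+Q_n(v)=2Q_n\bigl(u+\tfrac{w}{2}\bigr)+\tfrac12 Q_n(w).
\]
Taking $\limsup$ and using $\limsup(A_n+B_n)\ge\limsup A_n+\liminf B_n$ together with $\liminf_n Q_n(w)\ge a_-\mathcal{E}(w)$ (because $E_n\to\mathcal{E}$), I expect to get
\[
\psi(u)+\psi(v)\ge 2\psi\bigl(u+\tfrac{w}{2}\bigr)+\tfrac{a_-}{4}\|w\|^2 .
\]
The second step feeds in the subgradient inequality at the midpoint. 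Let $c$ be the best constant for which $\psi(u+z)\ge\psi(u)+\langle\eta,z\rangle+c\|z\|^2$ holds for all $u$, all $z$ and all $\eta\in\partial\psi(u)$; initially $c\ge0$ is known. Applying this with $z=w/2$ and inserting it into the previous display gives the improved constant $\tfrac{c}{2}+\tfrac{a_-}{4}$. Iterating this self-improvement drives $c$ up to the fixed point of the map $c\mapsto \tfrac{c}{2}+\tfrac{a_-}{4}$, which lies in $[\tfrac{a_-}{2},\infty)$. The target is
\[
\psi(v)\ge\psi(u)+\langle\eta,v-u\rangle+a_-\|v-u\|^2,
\]
which is exactly \eqref{ass:psi_rho} with $\rho(x)=a_-x$.

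\textbf{Main obstacle.} The hard step is the constant in the last part. As set up, the bootstrap only reaches $\tfrac{a_-}{2}$, which is half the stated $a_-$. The constant weight $a\equiv1$ is the decisive test case. There $\psi=\tfrac12\mathcal{E}$ is quadratic, $\partial\psi(u)=\{\tfrac12\mathcal{E}'(u)\}$, and the defect $\psi(v)-\psi(u)-\langle\eta,v-u\rangle$ equals exactly $\tfrac12\|v-u\|^2$. So before finishing I would check whether the intended normalization is $\|u\|^2=\mathcal{E}(u)$, as the paper states, or whether $\psi$ or $\mathcal{E}$ carries an extra factor $2$. Only under such a normalization can the midpoint argument above deliver the full constant $a_-$ rather than $\tfrac{a_-}{2}$. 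Short of that, the plan establishes \eqref{ass:psi_rho} with $\rho(x)=\tfrac{a_-}{2}x$. That smaller $\rho$ still meets the requirements of \eqref{ass:psi_rho}, since it is increasing with $\rho(0)=0$.
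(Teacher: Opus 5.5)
Your argument is correct, and you are right about the constant: the error is in the statement, not in your proof. The paper defines $\|u\|_{H_0^1(V)}=\sqrt{\mathcal{E}(u)}$, so there is no hidden factor $2$. For $a\equiv 1$ one has $\psi=\tfrac12\|\cdot\|_{H^1_0}^2$, $a_-=1$, and $\psi(v)-\psi(u)-\langle\psi'(u),v-u\rangle=\tfrac12\|v-u\|^2$. Hence $(\psi_\rho)$ with $\rho(x)=a_-x$ fails for every $v\neq u$. The sharp constant is the one you obtain, $\rho(x)=\tfrac{a_-}{2}x$, and you should state it as the result rather than hedge.

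The paper's proof does not reach $a_-$ either. It shows that $\psi-\tfrac{a_-}{2}\mathcal{E}=\tfrac12\limsup_n(Q_n-a_-E_n)$ is convex. From this it derives the strong monotonicity $\langle\eta-\xi,u-v\rangle\ge a_-\|u-v\|^2$ of $\partial\psi$, which is correct. It then cites an equivalence lemma to read off $(\psi_\rho)$ with the same $\rho$. That last step loses the factor $2$: passing from a monotonicity modulus to the subgradient form halves it. You can see this by integrating along the segment, or by applying the subgradient inequality to the convex function $\psi-\tfrac{a_-}{2}\|\cdot\|^2$. Nothing later in the paper depends on the value, since $(\psi_\rho)$ is only used qualitatively there (an increasing $\rho$ with $\rho(t)>0$ for $t>0$).

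On the route: your one-sided parallelogram inequality $\psi(u)+\psi(v)\ge2\psi(\tfrac{u+v}{2})+\tfrac{a_-}{4}\|u-v\|^2$ is exactly midpoint convexity of $g=\psi-\tfrac{a_-}{2}\|\cdot\|^2$. So you and the paper isolate the same key fact from the same two inputs: the pointwise bound $Q_n\ge a_-E_n$, and genuine convergence of $E_n$. The latter is what controls your $\liminf$, and what lets the paper pull the subtraction inside the $\limsup$.

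The paper gets full convexity of $g$ at once, because the weights $\tfrac{a(\boldsymbol{p}_{\omega i})+a(\boldsymbol{p}_{\omega j})}{2}-a_-$ are nonnegative and a $\limsup$ of convex functionals is convex. From there, $(\psi_\rho)$ with $\tfrac{a_-}{2}$ takes one line via $\eta-a_-\langle u,\cdot\rangle\in\partial g(u)$. The price is a simple subdifferential sum rule and, as written, an external citation. Your midpoint bootstrap is self-contained and avoids subdifferential calculus, at the cost of an iteration.

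You also prove continuity of $\psi$ explicitly, through the Lipschitz bound on $\sqrt{\mathcal{E}_a}$. The paper leaves this implicit; it follows from convexity together with $\psi\le\tfrac{a_+}{2}\|\cdot\|^2$.
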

\begin{proof}
    We apply \cite[Proposition 1.1.2]{hiriarturruty1993} to prove condition \eqref{ass:psi_rho}, which is equivalent to convexity of ${\frac{1}{2}\mathcal{E}_a(\cdot) -\frac{{a_-}}{2}\|{\cdot}\|_{H_0^1(V)}^2}$. Note that $\psi - \frac{{a_-}}{2}\mathcal{E}$ is convex since
    \begin{equation*}
        \psi(u) -\tfrac{{a_-}}{2}\mathcal{E}(u) = \tfrac12\limsup_{n\to \infty}\left(\tfrac{5}{3}\right)^{n} \sum_{\omega \in \Sigma_n} \sum_{1 \leq i < j \leq 3} \tfrac{\left(a(\boldsymbol{p}_{\omega i}) + a(\boldsymbol{p}_{\omega j})- 2 {a_-}\right)}{2}\big|u(\boldsymbol{p}_{\omega i})-u(\boldsymbol{p}_{\omega j})\big|^2.
    \end{equation*}
    Indeed, for every fixed $\boldsymbol{x},\boldsymbol{y}\in V$, the mapping $u \longmapsto \frac{a(\boldsymbol{x}) + a(\boldsymbol{y}) - 2{a_-}}{2}\big|u(\boldsymbol{x})-u(\boldsymbol{y})\big|^2$ is convex. Hence each 
    \begin{equation*}
        u\longmapsto \left(\tfrac{5}{3}\right)^{n} \sum_{\omega \in \Sigma_n} \sum_{1 \leq i < j \leq 3} \tfrac{a(\boldsymbol{p}_{\omega i}) + a(\boldsymbol{p}_{\omega j}) - 2{a_-}}{2}\big|u(\boldsymbol{p}_{\omega i})-u(\boldsymbol{p}_{\omega j})\big|^2
    \end{equation*}
    defines a convex functional. Consequently $\psi - \mathcal{E}_a$ is convex as an upper limit of convex functionals. Moreover $\partial \left(\psi - \frac{{a_-}}{2}\mathcal{E}\right)(u) = \partial \psi(u) - \{ {a_-}\mathcal{E}'(u)\}$. Using that and the fact that a subdifferential of a convex functional is monotone we get
    \begin{equation*}
        \begin{split}
            \langle \eta - \xi, u - v\rangle \geq & {a_-} \langle \mathcal{E}'(u) - \mathcal{E}'(v), u - v\rangle = {a_-}\|u - v\|_{H^1_0}^2\\
            & \text{for all }u,v\in H^1_0(V)\text{ and every }\eta \in \partial\psi(u), \xi \in \partial \psi(v),
        \end{split}
    \end{equation*}
    which gives \eqref{ass:psi_rho} by \cite[Lemma 3.1]{Pietrusiak}.
\end{proof}

In general, the functional $\mathcal{E}_a$ need not be G{\^a}teaux differentiable. However, its convexity allows one to consider its subdifferential, in the sense of convex analysis, at every point. Consequently, as in the case of the weak Laplacian, for continuous $a$ the operator ${-\Delta_a \colon H^1_0(V) \longrightarrow \left(H^1_0(V)\right)^*}$ is single-valued. When $a$ is merely measurable, this no longer holds; in that case one obtains a maximally monotone operator ${-\Delta_a \colon H^1_0(V) \longrightarrow 2^{\left(H^1_0(V)\right)^*}}$. Recall that, for a fixed function $f \in L^2(V)$, we will understand the inclusion
\begin{equation}
    \label{equ:inclusion_fixed_f}
    \begin{cases}
        -\Delta_a u(\boldsymbol{x}) \ni f(\boldsymbol{x}) & \text{for every }\boldsymbol{x}\in V\setminus V_0 \\
        u(\boldsymbol{x}) = 0 & \text{for every }\boldsymbol{x}\in V_0
    \end{cases}
\end{equation}
as
\begin{equation*}
    \tfrac{1}{2}\mathcal{E}_a(v) \geq \tfrac{1}{2}\mathcal{E}_a(u) + \int_V f(\boldsymbol{x})\big(v(\boldsymbol{x}) - u(\boldsymbol{x})\big)\, d\mu(\boldsymbol{x}) \quad \text{for every }v\in H^1_0(V).
\end{equation*}
Since $\mathcal{E}_a$ is of class $C^1$ under the assumption that $a$ is continuous, we may consider, instead of the inclusion \eqref{equ:inclusion_fixed_f}, the equation
\begin{equation*}
   \begin{cases}
       -\Delta_au(\boldsymbol{x}) = f(\boldsymbol{x}) & \text{for every }\boldsymbol{x}\in V\setminus V_0\\
       u(\boldsymbol{x}) = 0 & \text{for every }\boldsymbol{x}\in V_0,
   \end{cases}
\end{equation*}
In both cases, solutions are the critical points of the functional
\begin{equation*}
    H^1_0(V) \ni v\longmapsto \tfrac{1}{2}\mathcal{E}_a(v) - \int_V f(\boldsymbol{x}) v(\boldsymbol{x})\, d\mu(\boldsymbol{x}).
\end{equation*}
To consider $f$ depending on $u$, we impose suitable assumptions on $f$.

\subsection{Assumptions on nonlinear term}

Under the following, structural assumption on $f$:
\begin{equation}
    \label{ass:f_Caratheodory}
    \tag{$f_{\mathrm{C}}$}
    \boxed{
    \begin{tabular}{l}
        \text{$f\colon V\times \mathbb{R}\longrightarrow \mathbb{R}$ is \emph{an $L^1$-Carath\'eodory function}, that is:} \\
        \quad \text{$\bullet$ $f(\boldsymbol{\cdot}, u)$ is measurable for every $u\in \mathbb{R}$,} \\
        \quad \text{$\bullet$ $f(\boldsymbol{x}, \cdot)$ is continuous for $\mu$-a.e. $\boldsymbol{x}\in V$,}\\
        \quad \text{$\bullet$ for every $M > 0$ there exists $f_M\in L^1(V; \mathbb{R}_+)$ such that}\\
        \quad \qquad $\left|f(\boldsymbol{x}, u)\right| \leq f_M(\boldsymbol{x})$\text{ for $\mu$-a.e. }$\boldsymbol{x}\in V$\text{ and every }$u\in [-M,M]$.
    \end{tabular}
    }
\end{equation}
we define an antiderivative of $f(\boldsymbol{x}, \cdot)$ by the formula
\begin{equation}
    \label{equ:F-primitive_of_f}
    F(\boldsymbol{x},u) =  \int_{0}^{u} f(\boldsymbol{x},s)\, ds\quad \text{for all }u\in \mathbb{R}\text{ and $\mu$-a.e. }\boldsymbol{x}\in V.
\end{equation}
The functional $\Phi$ is defined in the following lemma.
\begin{lemma}
    \label{lem:F_satisfies_Phi}
    Assume that \eqref{ass:f_Caratheodory} holds. Then the functional $\Phi\colon H_0^1(V)\longrightarrow \mathbb{R}$ defined by
    \begin{equation}
        \label{equ:Phi}
        \Phi(u)= 
        -\int_V F\big(\boldsymbol{x},u(\boldsymbol{x})\big)\, d\mu(\boldsymbol{x})\quad \text{for every }u\in H^1_0(V)
    \end{equation}
    satisfies \eqref{ass:Phi}. Moreover
    \begin{equation}
        \label{equ:Phi_derivative}
        \left\langle \Phi'(u), v\right\rangle = -\int_V f\big(\boldsymbol{x}, u(\boldsymbol{x})\big)v(\boldsymbol{x})\, d\mu(\boldsymbol{x})\quad \text{for every }u,v\in H^1_0(V).
    \end{equation}
\end{lemma}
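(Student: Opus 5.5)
The plan is to exploit the compact embedding $H_0^1(V)\hookrightarrow C(V)$ from Lemma~\ref{lemma:embedding}. Because of it, every bounded set in $H_0^1(V)$ lies in a ball $\{\|u\|_\infty\le M\}$, where the $L^1$-Carath\'eodory bound $f_M$ is available. This is why only $L^1$ integrability of $f$ is needed. The proof then follows the classical Nemytskii-operator argument, carried out in $C(V)$ rather than $L^p$.

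First, $\Phi$ is well defined. By \eqref{ass:f_Caratheodory}, $F(\cdot,s)$ is measurable and $F(\boldsymbol{x},\cdot)$ is $C^1$ for a.e.\ $\boldsymbol{x}$. If $\|u\|_\infty\le M$, then $|F(\boldsymbol{x},u(\boldsymbol{x}))|\le M f_M(\boldsymbol{x})$, so the integral in \eqref{equ:Phi} is finite. Measurability of $\boldsymbol{x}\mapsto F(\boldsymbol{x},u(\boldsymbol{x}))$ is standard for Carath\'eodory functions and continuous $u$.

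Second, G\^ateaux differentiability with formula \eqref{equ:Phi_derivative}. Fix $u,v$ and $|t|\le1$, and set $M=\|u\|_\infty+\|v\|_\infty$. By the mean value theorem,
\[
\frac{F(\boldsymbol{x},u+tv)-F(\boldsymbol{x},u)}{t}=f(\boldsymbol{x},u+\theta tv)\,v
\]
for some $\theta=\theta(\boldsymbol{x},t)\in(0,1)$. This quotient converges pointwise a.e.\ to $f(\boldsymbol{x},u)v$ and is dominated by $f_M\|v\|_\infty\in L^1$. Dominated convergence then gives \eqref{equ:Phi_derivative}. The resulting functional is bounded on $H_0^1(V)$, since $|\langle\Phi'(u),v\rangle|\le 9\|f_{M'}\|_{L^1}\|v\|_{H_0^1}$ with $M'=\|u\|_\infty$.

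Third, and this is the key step, weak-to-strong continuity. Let $u_n\rightharpoonup u$ in $H_0^1(V)$. By Lemma~\ref{lemma:embedding}, $u_n\to u$ uniformly. In particular $\sup_n\|u_n\|_\infty\le M$ for some $M$. For $\|v\|_{H_0^1}\le1$ we have
\[
|\langle\Phi'(u_n)-\Phi'(u),v\rangle|\le 9\int_V\bigl|f(\boldsymbol{x},u_n(\boldsymbol{x}))-f(\boldsymbol{x},u(\boldsymbol{x}))\bigr|\,d\mu .
\]
The integrand tends to $0$ a.e.\ by continuity of $f(\boldsymbol{x},\cdot)$ and is dominated by $2f_M$. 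Hence $\|\Phi'(u_n)-\Phi'(u)\|_{(H_0^1)^*}\to0$.

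Since every norm-convergent sequence is weakly convergent, this also shows that $u\mapsto\Phi'(u)$ is norm-continuous. A G\^ateaux derivative that is continuous makes $\Phi$ Fr\'echet $C^1$, and condition \eqref{ass:Phi} follows. The only subtle point is to obtain the uniform sup-norm bound and the pointwise convergence from the compact embedding; once that is in hand, everything reduces to dominated convergence.
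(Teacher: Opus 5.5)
Your proof is correct and follows the same route as the paper. For the G\^ateaux derivative, both use the difference quotient, the mean value theorem and domination by $\|v\|_\infty f_M$; for weak-to-strong continuity of $\Phi'$, both use the compact embedding into $C(V)$ together with the estimate $\|\Phi'(u)-\Phi'(w)\|_{(H_0^1)^*}\le 9\,\|f(\cdot,u(\cdot))-f(\cdot,w(\cdot))\|_{L^1}$.

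The only difference is that you prove continuity of the Nemytskii map $u\mapsto f(\cdot,u(\cdot))$ from $C(V)$ to $L^1(V)$ directly by dominated convergence, and you state the G\^ateaux-to-Fr\'echet step explicitly. The paper instead cites the Idczak--Rogowski generalization of Krasnoselskii's theorem for that continuity.
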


\begin{proof}
    Let $u,v\in H_0^1(V)$. By \eqref{ass:f_Caratheodory} we have
    \begin{equation*}
        \lim\limits_{t\to0}\frac{F\big(\boldsymbol{x},u(\boldsymbol{x})+tv(\boldsymbol{x})\big)-F\big(\boldsymbol{x},u(\boldsymbol{x})\big)}{t}=f\big(\boldsymbol{x},u(\boldsymbol{x})\big)v(\boldsymbol{x})\quad \text{for $\mu$-a.e. }\boldsymbol{x}\in V.
    \end{equation*}
    The embedding $H_0^1(V) \hookrightarrow C(V)$ gives $\|u\|_\infty , \|v\|_\infty \leq M$ for some $M>0$ and hence the Lagrange mean value theorem yields
    \begin{equation*}
        \left|\frac{F\big(\boldsymbol{x},u(\boldsymbol{x})+tv(\boldsymbol{x})\big)-F\big(\boldsymbol{x},u(\boldsymbol{x})\big)}{t}\right|\le\max_{s\in[-2M,2M]}|f(\boldsymbol{x},s)|\cdot M \leq  M f_{2M}(\boldsymbol{x})
    \end{equation*}
    for $\mu$-a.e. $\boldsymbol{x}\in V$ provided $-1 \leq t \leq 1$. Using already mentioned imbedding $H^1_0(V) \hookrightarrow C(V)$ together with \cite[Theorem 2.1]{IdczakRogowski} we instantly get continuity of $\Phi'$ since for every $u, v \in H^1_0(V)$ one has
    \begin{equation*}
        \begin{split}
            \left|\langle \Phi'(u), v\rangle\right| & = \int_V \left|f\big(\boldsymbol{x}, u(\boldsymbol{x})\big) v(\boldsymbol{x})\right|\, d\mu(\boldsymbol{x})\\ 
            & \leq \|v\|_\infty \int_V \left|f\big(\boldsymbol{x}, u(\boldsymbol{x})\big) \right|\, d\mu(\boldsymbol{x}) \leq 9 \|v\|_{H^1_0}\left\|f\big(\boldsymbol{\cdot}, u(\boldsymbol{\cdot})\big)\right\|_{L^1}
        \end{split}
    \end{equation*}
    by Lemma \ref{lemma:embedding}.
\end{proof}

We consider also the growth condition of the following form
\begin{equation}
    \label{ass:f_sublinear}
    \tag{$f_{\infty}$}
    \boxed{
    \begin{tabular}{l}
        \text{there exists $\alpha < {a_-}\lambda_1$ and $\beta \in L^1(V; \mathbb{R}_+)$ such that}\\
        \qquad $F(\boldsymbol{x}, u) \leq \frac{\alpha}{2} |u|^2 + \beta(\boldsymbol{x})$\text{ for all $u\in \mathbb{R}$ and $\mu$-a.e. $\boldsymbol{x}\in V$}
    \end{tabular}
    }
\end{equation}
that provides the lower bound for $\Phi$ of the following form.

\begin{lemma}\label{lem: coercivity}
    Assume that \eqref{ass:f_Caratheodory} and \eqref{ass:f_sublinear} hold. Then functional $\Phi$ defined by \eqref{equ:Phi} satisfies
    \begin{equation*}
        \Phi(u) \geq -\tfrac{\alpha}{2\lambda_1}\|u\|_{H^1_0}^2 - \|\beta\|_{L^1}\quad \text{for every }u\in H^1_0(V).
    \end{equation*}
\end{lemma}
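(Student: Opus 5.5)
The plan is a direct pointwise-to-integral estimate. Fix $u\in H^1_0(V)$. By Lemma~\ref{lemma:embedding}, $u$ is continuous on $V$, so $x\mapsto u(\boldsymbol{x})$ is bounded and measurable. Together with \eqref{ass:f_Caratheodory}, this gives that $\boldsymbol{x}\mapsto F(\boldsymbol{x},u(\boldsymbol{x}))$ is measurable and integrable; it is bounded by $\|u\|_\infty f_{\|u\|_\infty}$. Hence $\Phi(u)$ is well defined, which is already implicit in Lemma~\ref{lem:F_satisfies_Phi}.

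Next, I apply \eqref{ass:f_sublinear} pointwise with the value $u(\boldsymbol{x})$:
\[
F\big(\boldsymbol{x},u(\boldsymbol{x})\big)\le \tfrac{\alpha}{2}|u(\boldsymbol{x})|^2+\beta(\boldsymbol{x})\quad\text{for $\mu$-a.e. }\boldsymbol{x}\in V.
\]
Integrating over $V$ with respect to $\mu$ and multiplying by $-1$ yields
\[
\Phi(u)\ge -\tfrac{\alpha}{2}\|u\|_{L^2}^2-\|\beta\|_{L^1}.
\]

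Finally, I use the Poincar\'e inequality \eqref{equ:Poincare_inequality}, $\|u\|_{L^2}^2\le\lambda_1^{-1}\|u\|_{H^1_0}^2$. The only point requiring care, and the one I expect to be the main (if minor) obstacle, is the sign of $\alpha$: the assumption only says $\alpha<a_-\lambda_1$, so $\alpha$ could be negative.

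If $\alpha\ge 0$, then $-\tfrac{\alpha}{2}\|u\|_{L^2}^2\ge-\tfrac{\alpha}{2\lambda_1}\|u\|_{H^1_0}^2$, and the claim follows immediately.

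If $\alpha<0$, the inequality goes the other way, so I will first replace $\alpha$ by $\max\{\alpha,0\}$. This is legitimate because \eqref{ass:f_sublinear} remains true after increasing $\alpha$ and still respects $\max\{\alpha,0\}<a_-\lambda_1$. With this I obtain the stated bound with $\alpha$ understood as nonnegative, which is the intended reading. Alternatively, I would note that the subsequent coercivity argument only needs the estimate with $\alpha^+$.
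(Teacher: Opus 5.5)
Your proposal is correct and follows the paper's proof: apply \eqref{ass:f_sublinear} pointwise at $u(\boldsymbol{x})$, integrate over $V$, and conclude with the Poincar\'e inequality \eqref{equ:Poincare_inequality}. Your caveat about the sign of $\alpha$ is well founded. The paper's last step silently uses $\alpha\ge 0$, and for $\alpha<0$ the literal statement is false: with $f(\boldsymbol{x},u)=\alpha u$ and $\beta=0$ it would force $\lambda_1\|u\|_{L^2}^2\ge\|u\|_{H^1_0}^2$ for all $u$. So replacing $\alpha$ by $\max\{\alpha,0\}$, which stays below $a_-\lambda_1$ and suffices for Theorem~\ref{thm:existence}, is the right repair.
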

\begin{proof}
    Using the Poincar{\'e} inequality \eqref{equ:Poincare_inequality} we get
    \begin{equation*}
        \begin{split}
            \Phi(u) & =-\int_VF(\boldsymbol{x},u(\boldsymbol{x}))\, d\mu\geq -\int_V \left( \tfrac{\alpha}{2}|u(\boldsymbol{x})|^2+\beta(\boldsymbol{x})\right)\, d\mu\\
            &= -\tfrac{\alpha}{2}\|u\|_{L^2}^2-\|\beta\|_{L^1}\ge -\tfrac{\alpha}{2\lambda_1}\|u\|_{H_0^1}^2-\|\beta\|_{L^1}\quad \text{for every }u\in H^1_0(V).\quad\qedhere
        \end{split}
    \end{equation*}
\end{proof}

\begin{remark}
    Assumption \eqref{ass:f_sublinear} can be easily reformulated in terms of the function $f$. It is sufficient to assume that \emph{there exist a constant $\alpha < a_- \lambda_1$ and a function $\beta \in L^1(V;\mathbb{R}_+)$ such that}
    \begin{equation*}
	   |f(\boldsymbol{x},u)| \le \alpha |u| + \beta(\boldsymbol{x}) \quad \text{for all } u \in \mathbb{R} \text{ and for $\mu$-a.e. } \boldsymbol{x}\in V .
    \end{equation*}
    Moreover, the above assumption is in particular satisfied whenever \emph{there exist constants $\alpha>0$, $r\in(0,1)$ and a function $\beta\in L^1(V;\mathbb{R}_+)$ such that}
    \begin{equation*}
	   f(\boldsymbol{x},u) \le \alpha |u|^{r} + \beta(\boldsymbol{x}) \quad \text{for all } u \in \mathbb{R} \text{ and for $\mu$-a.e. } \boldsymbol{x}\in V .
    \end{equation*}
\end{remark}

We consider the following assumption
\begin{equation}
    \label{ass:f_0}
    \tag{$f_0$}
    \boxed{
    \lim_{u \to 0} \frac{F(\boldsymbol{x}, u)}{|u|^2} = \infty \quad \text{uniformly with respect to $\mu$-a.e. }\boldsymbol{x}\in V
    }
\end{equation}
that allows to describe behaviour of $\Phi$ near $0$ in the following way.
\begin{lemma}
    \label{lem:small_near_zero}
    Assume that \eqref{ass:f_Caratheodory}, \eqref{ass:f_0} hold and define $\Phi\colon H^1_0(V) \longrightarrow \mathbb{R}$ using \eqref{equ:Phi}. Then for all $u\in H^1_0(V)$ and $c > 0$ there exists $t^* > 0$ such that
    \begin{equation*}
        \Phi(tu) < -c t^2\quad \text{for every }t\in (0,t^*).
    \end{equation*}
\end{lemma}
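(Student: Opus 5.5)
Fix $u\in H^1_0(V)$ and $c>0$. If $u=0$ the claim fails as stated, since $\Phi(0)=0$; I therefore assume $u\neq 0$, which is presumably the intended case. The plan is to turn the uniform condition \eqref{ass:f_0} into a pointwise lower bound for $F$ near $0$, and then integrate it along $tu$, using that $tu$ is uniformly small thanks to Lemma~\ref{lemma:embedding}.

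First, $u\neq 0$ and $u$ is continuous (Lemma~\ref{lemma:embedding}), so $\|u\|_{L^2}>0$; indeed $\mu$ has full support on $V$, so a nonzero continuous function has positive $L^2$ norm. Set $K:=2c/\|u\|_{L^2}^2+1$. By \eqref{ass:f_0}, applied uniformly in $\boldsymbol{x}$, there is $\delta>0$ such that
\[
F(\boldsymbol{x},s)\ge K|s|^2\quad\text{for all }|s|\le\delta\text{ and $\mu$-a.e. }\boldsymbol{x}\in V .
\]
The exceptional null set can be chosen independent of $s$, because that is what ``uniformly with respect to $\mu$-a.e.\ $\boldsymbol{x}$'' means.

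Next, put $t^*:=\delta/(\|u\|_\infty+1)$, so that $\|u\|_\infty<\infty$ by Lemma~\ref{lemma:embedding}. For $t\in(0,t^*)$ we have $|tu(\boldsymbol{x})|\le\delta$ for every $\boldsymbol{x}$. Hence
\[
\Phi(tu)=-\int_V F\big(\boldsymbol{x},tu(\boldsymbol{x})\big)\,d\mu\le -Kt^2\|u\|_{L^2}^2=-(2c+\|u\|_{L^2}^2)t^2<-ct^2 .
\]
The integral is well defined because $F(\boldsymbol{x},tu(\boldsymbol{x}))$ is bounded by $\delta f_\delta(\boldsymbol{x})$, which is integrable by \eqref{ass:f_Caratheodory}.

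The only delicate points are the two just noted. One is the positivity of $\|u\|_{L^2}$, which relies on the full support of the Hausdorff measure on $V$. The other is the uniform choice of $\delta$, which is what makes the sup-norm control from the embedding usable. Everything else is direct.
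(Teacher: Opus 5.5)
Your proof is correct and follows essentially the same route as the paper: a radius from \eqref{ass:f_0} that is uniform in $\boldsymbol{x}$, sup-norm control of $tu$ via Lemma~\ref{lemma:embedding}, and integration of the pointwise bound $F(\boldsymbol{x},s)\ge K|s|^2$. Your choice $K=2c/\|u\|_{L^2}^2+1$ improves on the paper. The paper's choice $\gamma=c\|u\|_{L^2}^{-2}$ only yields $\Phi(tu)\le -ct^2$ rather than the strict inequality stated. Your remark that $u\neq 0$ must be assumed is also correct, since the paper's proof silently divides by $\|u\|_{L^2}$.
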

\begin{proof}
    Let $\gamma = c\|u\|_{L^2}^{-2}$. By assumption \eqref{ass:f_0}, there exists $r > 0$ such that
    \begin{equation*}
        F(\boldsymbol{x}, u) \geq \gamma |u|^2\quad \text{for all }u\in [-r,r]\text{ and $\mu$-a.e. }\boldsymbol{x}\in V.
    \end{equation*}
    Since $H^1_0(V) \hookrightarrow C(V)$, we can take $t^* = \min \left(r\|u\|_{\infty}^{-1},1\right)$. Then for any $t\in (0,t^*)$ we get $\|t u\|_\infty < r$ and hence
    \begin{equation*}
        \Phi(tu) = -\int_V F\big(\boldsymbol{x}, tu(\boldsymbol{x})\big)\, d\mu(\boldsymbol{x}) \leq -\int_V \gamma |tu(\boldsymbol{x})|^2\, d\mu(\boldsymbol{x}) = -\gamma t^2 \|u\|_{L^2}^2 = -c t^2
    \end{equation*}
    for every $t\in (0, t^*)$.
\end{proof}

\begin{example}
    Taking $f\colon V \times \mathbb{R} \longrightarrow \mathbb{R}$ given by $f(\boldsymbol{x}, u) = |u|^{-\frac{1}{2}}u$ we get a standard nonlinearity satisfying \eqref{ass:f_0}.
\end{example}

Let us recall that the Ambrosetti-Rabinovitz condition:
\begin{equation}
    \label{ass:f_Ambrosetti-Rabinowitz}
    \tag{$f_{\mathrm{AR}}$}
    \boxed{
    \begin{tabular}{l}
         \text{there exist $\theta > 2$ and $M_0 > 0$ such that}\\
         \qquad $0 < \theta F(\boldsymbol{x}, u) \leq uf(\boldsymbol{x}, u)$\\
         \text{for all $u\in (-\infty, -M_0] \cup [M_0, \infty)$ and $\mu$-a.e. $\boldsymbol{x}\in V$}
    \end{tabular}
    }
\end{equation}
leads to the following lower bound for $F$.
\begin{lemma}\label{lem: AR_consequence}
    Assume that \eqref{ass:f_Caratheodory}, \eqref{ass:f_Ambrosetti-Rabinowitz} hold. Then there exist $A>0$ and $B\in L^1(V),$ such that 
    \begin{equation*}
        F(\boldsymbol{x},u)\geq A|u|^\theta-B(\boldsymbol{x}) \quad \text{ for all $u\in\mathbb{R}$ and $\mu$-a.e. $\boldsymbol{x}\in V$}.
    \end{equation*}
\end{lemma}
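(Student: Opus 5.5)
The plan is the standard ODE argument along rays, done pointwise in $\boldsymbol{x}$, followed by a crude bound on the bounded region $|u|\le M_0$ that uses the $L^1$-Carath\'eodory majorant.

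\emph{Large $|u|$.} Fix $\boldsymbol{x}$ outside the null set where \eqref{ass:f_Caratheodory} or \eqref{ass:f_Ambrosetti-Rabinowitz} fails. For $u\ge M_0$ the map $s\mapsto F(\boldsymbol{x},s)$ is $C^1$ with derivative $f(\boldsymbol{x},s)$, since $f(\boldsymbol{x},\cdot)$ is continuous. By \eqref{ass:f_Ambrosetti-Rabinowitz}, $F(\boldsymbol{x},s)>0$ and
\[
\frac{d}{ds}\ln F(\boldsymbol{x},s)=\frac{f(\boldsymbol{x},s)}{F(\boldsymbol{x},s)}\ge\frac{\theta}{s}\qquad\text{for } s\ge M_0 .
\]
Integrating from $M_0$ to $u$ gives
\[
F(\boldsymbol{x},u)\ge F(\boldsymbol{x},M_0)\,M_0^{-\theta}\,u^\theta .
\]
The same argument on $(-\infty,-M_0]$, using $\frac{d}{ds}\ln F\le \theta/s$ for $s<0$ and integrating from $u$ to $-M_0$, gives
\[
F(\boldsymbol{x},u)\ge F(\boldsymbol{x},-M_0)\,M_0^{-\theta}|u|^\theta .
\]
Setting $m(\boldsymbol{x})=M_0^{-\theta}\min\{F(\boldsymbol{x},M_0),F(\boldsymbol{x},-M_0)\}>0$, we obtain $F(\boldsymbol{x},u)\ge m(\boldsymbol{x})|u|^\theta$ for $|u|\ge M_0$.

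\emph{Main obstacle.} The statement asks for a constant $A>0$, but $m(\boldsymbol{x})$ depends on $\boldsymbol{x}$, and positivity of $F$ at $\pm M_0$ need not be uniform. I would read \eqref{ass:f_Ambrosetti-Rabinowitz} as giving, implicitly, that $\operatorname{ess\,inf}_{\boldsymbol{x}}F(\boldsymbol{x},\pm M_0)>0$, which is the standard uniform version. If that reading is not available, I would try to extract a set where $m\ge \delta>0$ and absorb its complement into $B$. This fails for unbounded $|u|$, so the uniform positivity really is needed. I would therefore state it explicitly and put $A=\operatorname{ess\,inf} m>0$.

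\emph{Bounded region.} For $|u|\le M_0$, \eqref{ass:f_Caratheodory} gives $|F(\boldsymbol{x},u)|\le M_0 f_{M_0}(\boldsymbol{x})$. Hence
\[
F(\boldsymbol{x},u)\ge A|u|^\theta - A M_0^\theta - M_0 f_{M_0}(\boldsymbol{x}).
\]
Taking $B(\boldsymbol{x})=AM_0^\theta+M_0f_{M_0}(\boldsymbol{x})$, which lies in $L^1(V)$ because $\mu(V)=1$, covers both regions at once, since $B\ge 0$ and the large-$|u|$ bound already holds without it. This proves the lemma.
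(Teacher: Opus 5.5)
Your route is the paper's: for fixed $\boldsymbol{x}$, compare $\frac{d}{ds}\ln F(\boldsymbol{x},s)$ with $\theta/s$ on $|s|\ge M_0$ and integrate, then control $|u|\le M_0$ with the Carath\'eodory majorant.

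The obstacle you flag is genuine, and the paper does not overcome it. Its constant $A=F(\boldsymbol{x},M_0)/M_0^{\theta}$ depends on $\boldsymbol{x}$, so the paper proves only your pointwise bound $F(\boldsymbol{x},u)\ge m(\boldsymbol{x})|u|^\theta$ for $|u|\ge M_0$. With a constant $A$, the lemma is false under \eqref{ass:f_Caratheodory} and \eqref{ass:f_Ambrosetti-Rabinowitz} alone. Take $\varepsilon(\boldsymbol{x})=|\boldsymbol{x}-\boldsymbol{p}_1|$ and $f(\boldsymbol{x},u)=\theta\varepsilon(\boldsymbol{x})|u|^{\theta-2}u$. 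Then $F=\varepsilon|u|^\theta$ and $\theta F=uf>0$ for $u\neq0$ and $\boldsymbol{x}\neq\boldsymbol{p}_1$. A bound $F\ge A|u|^\theta-B$ would force $(A-\varepsilon(\boldsymbol{x}))|u|^\theta\le B(\boldsymbol{x})$ for all $u$. That is impossible on $\{\varepsilon<A\}$, a neighbourhood of $\boldsymbol{p}_1$ in $V$ of positive $\mu$-measure. So some added uniformity is unavoidable. Your $\operatorname{ess\,inf}_{\boldsymbol{x}}F(\boldsymbol{x},\pm M_0)>0$ is a natural sufficient choice, and with it your proof is complete.

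Your details are also sounder than the paper's in two places. First, for $s\le -M_0$ the correct inequality is $f/F\le\theta/s$; the paper's $\theta/v\le f/F$ for $|v|\ge M$ has the wrong direction when $v<0$. Second, $B$ must absorb both $AM_0^\theta$ and the factor $M_0$, as your $B=AM_0^\theta+M_0f_{M_0}$ does, whereas the paper's $B=f_M$ is too small in general.

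If you would rather not strengthen \eqref{ass:f_Ambrosetti-Rabinowitz}, you can keep the $\boldsymbol{x}$-dependent coefficient $m$ instead. It is measurable, positive a.e., and bounded by $M_0^{1-\theta}f_{M_0}\in L^1(V)$. This weaker form is all the proof of Theorem~\ref{thm:nonzero_solution} uses, because $\int_V m|u|^\theta\,d\mu>0$ for every $u\in H^1_0(V)\setminus\{0\}$. Indeed, a nonzero continuous $u$ is nonzero on some cell $\boldsymbol{F}_\omega V$ with $\omega\in\Sigma_n$, which has measure $3^{-n}>0$.
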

\begin{proof}
    We use standard arguments analogous to those in \cite[Remark~7.5]{Jabri}, adapted to the Sierpiński triangle $V$. Let us start by setting $M = \max\{1, M_0\}$ and fixing $\boldsymbol{x} \in V$ for which \eqref{ass:f_Ambrosetti-Rabinowitz} holds. Then we obtain the inequality 
    \begin{equation}
        \label{equ:lem AR_consequence:proof}
        \frac{\theta}{v} \leq \frac{f(\boldsymbol{x},v)}{F(\boldsymbol{x},v)}, \quad \text{if } |v| \geq M.
    \end{equation}
    Integrating it over the interval $[M, u]$, with $u > M$, with respect to $v$, we get $\theta \ln \frac{u}{M_0} \leq \ln \frac{F(\boldsymbol{x},u)}{F(\boldsymbol{x},M_0)}$. Hence, 
    \begin{equation}
        \label{equ:lem AR_consequence:proof-2}
       A |u|^{\theta} \leq F(\boldsymbol{x},u)
    \end{equation}
    for $u \geq M$, where $A = \frac{F(\boldsymbol{x},M_0)}{M_0^{\theta}}$. Integrating \eqref{equ:lem AR_consequence:proof} over $[v, -M]$, with $v < -M$, shows that inequality \eqref{equ:lem AR_consequence:proof-2} holds also for $v \leq -M$. Finally, to obtain the assertion, it is enough to take $B = f_M$, where $f_M$ is the bound from assumption \eqref{ass:f_Caratheodory}.
\end{proof}

Together with \eqref{ass:f_Ambrosetti-Rabinowitz}, it is typical to assume the following condition on $F$
\begin{equation}
    \label{equ:limsup_F=0}
    \limsup_{u \to 0}\frac{F(\boldsymbol{x},u)}{|u|^2} = 0 \quad \text{uniformly for $\mu$-a.e. } \boldsymbol{x} \in V,
\end{equation}

\begin{example}
    \label{exam:2}
    Let $f\colon V \times \mathbb{R} \longrightarrow \mathbb{R}$ be given by the formula $f(\boldsymbol{x}, u) = |u|u$. Then $F(\boldsymbol{x}, u) = \frac{1}{3}|u|^3$ and hence \eqref{equ:limsup_F=0} holds. Assumption \eqref{ass:f_Ambrosetti-Rabinowitz} holds as well for $\theta = 3$ and any $M_0 > 0$.
\end{example}

Condition \eqref{ass:f_Ambrosetti-Rabinowitz} is clearly incompatible with
\eqref{ass:f_0}. Motivated by \cite{Bereanu2012}, we therefore propose an
alternative assumption. Observe that whenever \eqref{equ:limsup_F=0} holds,
there exists $R>0$ sufficiently small such that
\begin{equation*}
    F(\boldsymbol{x},u)
    \le \tfrac{a_-|u|^2}{163}
    \le \tfrac{a_-R^2}{163}
    < \tfrac{a_-R^2}{162}
\end{equation*}
for all $u\in [-R,R]$ and for $\mu$-a.e. $\boldsymbol{x} \in V$. This motivates the following assumption:
\begin{equation}
    \label{ass:f_R}
    \tag{$f_R$}
    \boxed{
    \begin{tabular}{l}
        \text{there exist $R>0$ and $\alpha<\frac{a_-R^2}{162}$ such that}\\
        \qquad $F(\boldsymbol{x},u)\le \alpha$\\
        \text{for all $u\in[-R,R]$ and for $\mu$-a.e. $\boldsymbol{x}\in V$},
    \end{tabular}
    }
\end{equation}
which is therefore satisfied. Observe that the seemingly unusual constant $162$ is simply the product of the standard variational factor $\frac{1}{2}$ and the square of the inverse of the constant $9$ appearing in the embedding $H^1(V) \hookrightarrow C(V)$. Consequently, the function $f$ described in Example~\ref{exam:2} satisfies both \eqref{equ:limsup_F=0} and \eqref{ass:f_Ambrosetti-Rabinowitz}. Below we provide an explicit example of a nonlinearity satisfying simultaneously \eqref{ass:f_0}, \eqref{ass:f_Ambrosetti-Rabinowitz}, and \eqref{ass:f_R}.

\begin{example}
    Consider, for simplicity, $a \equiv 1$, and let $f\colon V \times \mathbb{R}\longrightarrow \mathbb{R}$ be defined by
    \begin{equation*}
        f(\boldsymbol{x},u)=
        \begin{cases}
            \frac{1}{11} |u|^{-\frac{1}{2}}u &\text{ if } |u|\leq 100,\\
            \frac{1}{11\,000} |u|u &\text{ if } |u|>100.
        \end{cases}
    \end{equation*}
    Assumption \eqref{ass:f_0} holds, since $F(\boldsymbol{x}, u) = \frac{3}{22}|u|^\frac{3}{2}$ for $u\in [-100,100]$ and $\boldsymbol{x}\in V$. Hence
    \begin{equation*}
        \lim_{u\to 0}\frac{F(\boldsymbol{x}, u)}{|u|^2} = \lim_{u\to 0}\frac{3}{22\sqrt{|u|}} = \infty.
    \end{equation*}
    If $|u| > 100$ and $\boldsymbol{x}\in V$, then $F(\boldsymbol{x}, u) = \frac{|u|^3}{33\, 000} + \frac{1000}{33}$. To verify that \eqref{ass:f_Ambrosetti-Rabinowitz} holds, we take $\theta = \frac{23}{8} > 2$. The function $F$ is positive for $u\neq 0$. A straightforward computation shows that $\theta F(\boldsymbol{x}, u) \leq u f(\boldsymbol{x}, u)$ is equivalent to
    \begin{equation*}
        \frac{1000}{33} \leq \frac{|u|^3}{24\cdot 11\, 000}
    \end{equation*}
    for all $u\in (-\infty, -100)\cup (100, \infty)$. Therefore, it suffices to take $M_0 = 200$ in \eqref{ass:f_Ambrosetti-Rabinowitz}. Next, since we assumed that $a \equiv 1$, we have $a_- = a_+ = 1$. Moreover, taking $R = 100$ we obtain
    \begin{equation*}
        F(\boldsymbol{x}, u) \leq F(\boldsymbol{x}, R) = \frac{2000}{33} < 61 < \frac{10\, 000}{162} = \frac{a_- R^2}{162}
    \end{equation*}
    for every $\boldsymbol{x}\in V$ and $u\in [-R,R]$. Consequently, \eqref{ass:f_R} holds.
\end{example}

Assumption \eqref{ass:f_R} allows to better control a geometry of $\Phi$ as it is described in the following
\begin{lemma}
    \label{lem:mountains}
    Assume that \eqref{ass:f_Caratheodory}, \eqref{ass:f_R} hold and define $\Phi \colon H^1_0(V)\longrightarrow \mathbb{R}$ using \eqref{equ:Phi}. Then for every $u\in H^1_0(V)$ with $\|u\|_{H^1_0} = \frac{R}{9}$ we have $\Phi(u) \geq -\alpha$.
\end{lemma}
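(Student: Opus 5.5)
The plan is to combine the embedding estimate of Lemma~\ref{lemma:embedding} with the pointwise bound in \eqref{ass:f_R}. Fix $u\in H^1_0(V)$ with $\|u\|_{H^1_0}=\frac{R}{9}$. By Lemma~\ref{lemma:embedding}, $u$ is continuous and
\[
\|u\|_\infty\le 9\|u\|_{H^1_0}=R,
\]
so $u(\boldsymbol{x})\in[-R,R]$ for every $\boldsymbol{x}\in V$. This is the only place where the specific radius $\frac{R}{9}$ enters: it is chosen exactly so that the sup-norm stays inside the interval on which \eqref{ass:f_R} controls $F$.

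Next, take the full-measure set of $\boldsymbol{x}$ on which \eqref{ass:f_R} holds for all $s\in[-R,R]$; the statement of \eqref{ass:f_R} provides it uniformly in $s$. For such $\boldsymbol{x}$, setting $s=u(\boldsymbol{x})$ gives $F(\boldsymbol{x},u(\boldsymbol{x}))\le\alpha$. The function $\boldsymbol{x}\mapsto F(\boldsymbol{x},u(\boldsymbol{x}))$ is measurable and integrable, since it is bounded by $R f_R(\boldsymbol{x})$ via \eqref{ass:f_Caratheodory} and \eqref{equ:F-primitive_of_f}. Integrating against the probability measure $\mu$ (recall $\mu(V)=1$) therefore yields
\[
\int_V F\big(\boldsymbol{x},u(\boldsymbol{x})\big)\,d\mu\le\alpha,
\]
which gives $\Phi(u)\ge-\alpha$ by \eqref{equ:Phi}.

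There is essentially no obstacle here. The only point to check carefully is that the a.e.\ set in \eqref{ass:f_R} does not depend on $s$, so that the composition with $u$ is legitimate. If the assumption were only read pointwise in $s$, one would instead use continuity of $F(\boldsymbol{x},\cdot)$ together with a countable dense set of values in $[-R,R]$ to obtain a single null set.
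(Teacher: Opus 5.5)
Your proposal is correct and follows the paper's proof: the embedding of Lemma~\ref{lemma:embedding} gives $\|u\|_\infty\le 9\|u\|_{H^1_0}=R$, then \eqref{ass:f_R} bounds the integrand by $\alpha$ and $\mu(V)=1$ gives $\Phi(u)\ge-\alpha$. Your extra remarks on measurability, integrability and the choice of the null set fill in details the paper leaves implicit, and they are accurate.
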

\begin{proof}
    Take $u$ satisfying assumptions of the lemma. Then $\|u\|_\infty \leq 9\|u\|_{H^1_0} = R$ and hence
    \begin{equation*}
        \Phi(u)  = -\int_V F\big(\boldsymbol{x}, u(\boldsymbol{x})\big)\, d\mu(\boldsymbol{x}) \geq -\int_V \alpha\, d\mu(\boldsymbol{x}) = -\alpha. \qedhere
    \end{equation*}
\end{proof}

\section{Solvability of main problem}
\label{sec: Main result}
Analogously to the case of the nonlinear Laplace equation with Dirichlet boundary condition described in \cite{strichartzwong2004}, we call $u\in H^1_0(V)$ a solution to the equation 
\begin{equation}
    \label{equ:main_problem}
    \begin{cases}
        -\Delta_a u(\boldsymbol{x}) \ni f\big(\boldsymbol{x}, u(\boldsymbol{x})\big) & \text{for }\boldsymbol{x}\in V\setminus V_0,\\
        u(\boldsymbol{x}) = 0 & \text{for }\boldsymbol{x}\in V_0,
    \end{cases}
\end{equation}
if the function is a critical point of the functional $I\colon H^1_0(V) \longrightarrow \mathbb{R}$ defined by
\begin{equation}
    \label{equ:I}
    I(u) = \tfrac12\mathcal{E}_a(u)-\int_V F\big(\boldsymbol{x},u(\boldsymbol{x})\big)\, d\mu(\boldsymbol{x})\quad \text{for every }u\in H^1_0(V)
\end{equation}
with $\mathcal{E}_a$ defined by \eqref{equ:functional_E_a} and $F$ given by \eqref{equ:F-primitive_of_f}. Equivalently $u$ solves \eqref{equ:main_problem}, if the following hemivariational inequality holds
\begin{equation*}
   \int_V f(\boldsymbol{x},u(\boldsymbol{x}))\big(v(\boldsymbol{x})-u(\boldsymbol{x})\big)\, d\mu(\boldsymbol{x}) +\tfrac12\mathcal{E}_a(v) \geq \tfrac12\mathcal{E}_a(u)\quad \text{for all }v\in H_0^1(V).
\end{equation*}
Consequently $I$ has the form \eqref{equ:Szulkin-type_functional} if we let $\psi$ and $\Phi$ be given by \eqref{equ:psi} and \eqref{equ:Phi}, respectively. We begin with a basic result guaranteeing the solvability of
equation~\eqref{equ:main_problem}.

\begin{theorem}
    \label{thm:existence}
    Assume that \eqref{ass:a}, \eqref{ass:f_Caratheodory} and \eqref{ass:f_sublinear} hold. Then problem \eqref{equ:main_problem} has at least one solution $u^*$ satisfying
    \begin{equation}
        \label{equ:minimization_I}
        I(u^*) = \min_{u\in H^1_0(V)}I(u)
    \end{equation}
    with $I$ given by \eqref{equ:I}.
\end{theorem}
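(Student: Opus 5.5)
The plan is to apply Proposition~\ref{prop:I_coercive=>(PS)} to $I=\psi+\Phi$ on $E=H^1_0(V)$, with $\psi$ given by \eqref{equ:psi} and $\Phi$ by \eqref{equ:Phi}. First I will check the structural hypotheses. Assumption \eqref{ass:E} holds because $H^1_0(V)$ is a Hilbert space, hence reflexive. By Proposition~\ref{prop:E_a_satisfies_psi}, assumption \eqref{ass:a} yields that $\psi$ satisfies \eqref{ass:psi} and \eqref{ass:psi_rho} with $\rho(x)=a_-x$. By Lemma~\ref{lem:F_satisfies_Phi}, assumption \eqref{ass:f_Caratheodory} yields that $\Phi$ satisfies \eqref{ass:Phi}. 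So the only remaining hypothesis of Proposition~\ref{prop:I_coercive=>(PS)} is coercivity of $I$.

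For coercivity I combine two lower bounds. The left inequality in \eqref{equ:direct_inequalities} gives $\psi(u)\ge \tfrac{a_-}{2}\|u\|_{H^1_0}^2$. Its proof for continuous $a$ uses only pointwise bounds on the weights in each finite sum, so it carries over verbatim to the $\limsup$ definition \eqref{equ:functional_E_a} under \eqref{ass:a}; the paper notes this explicitly after \eqref{equ:functional_E_a}. Lemma~\ref{lem: coercivity} gives $\Phi(u)\ge -\tfrac{\alpha}{2\lambda_1}\|u\|_{H^1_0}^2-\|\beta\|_{L^1}$. Adding the two,
\[
I(u)\ge \tfrac12\Bigl(a_- - \tfrac{\alpha}{\lambda_1}\Bigr)\|u\|_{H^1_0}^2-\|\beta\|_{L^1}.
\]
The coefficient is strictly positive because $\alpha<a_-\lambda_1$ by \eqref{ass:f_sublinear}. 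If $\alpha\le 0$ I simply discard the $\Phi$ quadratic term, or equivalently note that \eqref{ass:f_sublinear} still holds with $\alpha$ replaced by $0$. Either way $I(u)\to\infty$ as $\|u\|_{H^1_0}\to\infty$.

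Proposition~\ref{prop:I_coercive=>(PS)} then gives a global minimizer $u^*$ of $I$, which is \eqref{equ:minimization_I}. It remains to check that $u^*$ is a critical point in the sense of Szulkin, i.e.\ a solution of \eqref{equ:main_problem}. Fix $v\in H^1_0(V)$ and $t\in(0,1]$. Minimality and convexity of $\psi$ give
\[
0\le I(u^*+t(v-u^*))-I(u^*)\le t\bigl(\psi(v)-\psi(u^*)\bigr)+\Phi(u^*+t(v-u^*))-\Phi(u^*).
\]
Dividing by $t$ and letting $t\to0^+$, the $C^1$ property of $\Phi$ yields $\psi(v)-\psi(u^*)+\langle\Phi'(u^*),v-u^*\rangle\ge0$. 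By \eqref{equ:Phi_derivative}, this is exactly the hemivariational inequality defining a solution.

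None of these steps looks like a real obstacle. The only point that needs care is making sure the lower bound $a_-\mathcal{E}\le\mathcal{E}_a$ is legitimately available for the $\limsup$ definition with merely bounded $a$, which follows termwise.
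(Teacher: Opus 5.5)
Your proposal is correct and follows the paper's proof essentially step for step: the structural hypotheses come from Proposition~\ref{prop:E_a_satisfies_psi} and Lemma~\ref{lem:F_satisfies_Phi}, coercivity comes from $a_-\mathcal{E}\le\mathcal{E}_a$ combined with Lemma~\ref{lem: coercivity}, and the minimizer comes from Proposition~\ref{prop:I_coercive=>(PS)}. Your two additions just fill in details the paper leaves implicit: replacing a negative $\alpha$ by $0$ (sensible, since the Poincar\'e step in Lemma~\ref{lem: coercivity} only runs in the needed direction when $\alpha\ge 0$), and the explicit convexity argument that a global minimizer is a Szulkin critical point.
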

\begin{proof}
    We define $I$ using formula \eqref{equ:I}. By applying Proposition~\ref{prop:E_a_satisfies_psi} and Lemma~\ref{lem:F_satisfies_Phi}, we see that we are in a position to use Proposition~\ref{prop:I_coercive=>(PS)} (assumption \eqref{ass:E} holds trivially, since $H^1_0(V)$ is a Hilbert space). Therefore, it suffices to show that 
    \begin{equation*}
        \lim_{\|u\|_{H^1_0} \to \infty} I(u) = \infty.
    \end{equation*}
    It follows immediately from estimates \eqref{equ:direct_inequalities} together with Lemma~\ref{lem: coercivity}. Indeed,
    \begin{equation*}
        I(u) \geq \tfrac{a_-}{2} \mathcal{E}(u) - \tfrac{\alpha}{2 \lambda_1} \|u\|_{H^1_0}^2 - \|\beta\|_{L^1} = \tfrac{1}{2} \left(a_- - \tfrac{\alpha}{\lambda_1}\right) \|u\|_{H^1_0}^2 - \|\beta\|_{L^1} \quad \text{for every } u \in H^1_0(V).
    \end{equation*}
    The argument of a global minimum is always a critical point and hence, the existence of a solution is obtained.
\end{proof}

Before proceeding further, let us prove the following technical lemma, which, combined with Proposition \ref{prop:Mawhin2}, will allow us to establish the existence of two nontrivial solutions.
 
\begin{lemma}
    \label{lem: P-S condition}
     Assume that \eqref{ass:f_Caratheodory} and \eqref{ass:f_Ambrosetti-Rabinowitz} hold and let $I$ be given by \eqref{equ:I}. Then every Palais-Smale sequence $(u_n)\subset H_0^1(V)$ for the functional $I$ is bounded. 
\end{lemma}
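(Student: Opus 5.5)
The plan is the classical Ambrosetti--Rabinowitz estimate, adapted to the Szulkin setting where $\psi=\tfrac12\mathcal{E}_a$ need not be differentiable. Let $(u_n)$ be a Palais--Smale sequence, so $I(u_n)\le C$ and \eqref{equ:almost_critical} holds with $\varepsilon_n\to 0$. The almost-criticality will be tested with the single direction $v=(1+\tfrac1\theta)u_n$, i.e.\ $v-u_n=\tfrac1\theta u_n$; the same argument works with $v=(1+s)u_n$ for any fixed small $s>0$. By $2$-homogeneity of $\mathcal{E}_a$ (each term under the $\limsup$ in \eqref{equ:functional_E_a} scales by $t^2$), we have $\psi(tu)=t^2\psi(u)$ for $t>0$. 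Hence \eqref{equ:almost_critical} together with \eqref{equ:Phi_derivative} yields
\begin{equation*}
\Bigl((1+\tfrac1\theta)^2-1\Bigr)\tfrac12\mathcal{E}_a(u_n)-\tfrac1\theta\int_V f(\boldsymbol{x},u_n)u_n\,d\mu\ \ge\ -\tfrac{\varepsilon_n}{\theta}\|u_n\|_{H^1_0}.
\end{equation*}
To avoid the unpleasant coefficient $(1+1/\theta)^2-1=2/\theta+1/\theta^2$, I would instead use $v=(1+s)u_n$ and divide by $s$. This gives
\begin{equation*}
\tfrac{(1+s)^2-1}{s}\,\tfrac12\mathcal{E}_a(u_n)-\int_V f(\boldsymbol{x},u_n)u_n\,d\mu\ge-\varepsilon_n\|u_n\|,
\end{equation*}
and the coefficient $(2+s)$ can be taken close to $2$.

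The next step is to combine this with $I(u_n)\le C$ multiplied by $\theta$. This gives
\begin{equation*}
\theta C\ \ge\ \tfrac{\theta}{2}\mathcal{E}_a(u_n)-\theta\int_V F(\boldsymbol{x},u_n)\,d\mu,
\end{equation*}
and adding $\int_V f(\boldsymbol{x},u_n)u_n\,d\mu\ge -(2+s)\tfrac12\mathcal{E}_a(u_n)... $ in the rearranged form yields
\begin{equation*}
\theta C+\varepsilon_n\|u_n\|\ \ge\ \tfrac{\theta-2-s}{2}\,\mathcal{E}_a(u_n)+\int_V\bigl(f(\boldsymbol{x},u_n)u_n-\theta F(\boldsymbol{x},u_n)\bigr)\,d\mu.
\end{equation*}
Choose $s<\theta-2$. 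The last integral is split over the sets $\{|u_n|\ge M_0\}$ and $\{|u_n|<M_0\}$. On the first set the integrand is nonnegative by \eqref{ass:f_Ambrosetti-Rabinowitz}. On the second set, \eqref{ass:f_Caratheodory} gives $|f(\boldsymbol{x},u)u-\theta F(\boldsymbol{x},u)|\le (1+\theta)M_0 f_{M_0}(\boldsymbol{x})$, so the integral is bounded below by $-(1+\theta)M_0\|f_{M_0}\|_{L^1}$. Using \eqref{equ:direct_inequalities}, we arrive at
\begin{equation*}
\tfrac{(\theta-2-s)a_-}{2}\|u_n\|_{H^1_0}^2\le \theta C+(1+\theta)M_0\|f_{M_0}\|_{L^1}+\varepsilon_n\|u_n\|_{H^1_0},
\end{equation*}
and a quadratic dominated by a linear term plus a constant forces $(u_n)$ to be bounded.

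The main obstacle is the non-smooth step: one cannot write $\langle I'(u_n),u_n\rangle$. The point of the argument is that homogeneity of $\mathcal{E}_a$, which survives the $\limsup$ since $\limsup(t^2x_n)=t^2\limsup x_n$, lets the variational inequality along the ray through $u_n$ replace the derivative. The lower bound of Lemma~\ref{lem: AR_consequence} is not even needed here; only positivity and the $f_{M_0}$ bound on the bounded range are used.
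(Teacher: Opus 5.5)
Your proof is correct and is essentially the paper's argument. You test the almost-criticality inequality along the ray $v=(1+s)u_n$ using the $2$-homogeneity of $\mathcal{E}_a$, combine the result with the bound on $I(u_n)$ weighted by $\theta$, and split the integral over $\{|u_n|\ge M_0\}$ (sign from \eqref{ass:f_Ambrosetti-Rabinowitz}) and $\{|u_n|<M_0\}$ (bound $(1+\theta)M_0 f_{M_0}$).

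The only difference is that the paper lets $t\downarrow 0$ to get the sharp coefficient $\theta-2$, while you keep $s\in(0,\theta-2)$ fixed, which works equally well. Your garbled ``adding'' step should read $\int_V f(\boldsymbol{x},u_n)u_n\,d\mu\le \tfrac{2+s}{2}\mathcal{E}_a(u_n)+\varepsilon_n\|u_n\|_{H^1_0}$; your resulting display is nonetheless correct.
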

\begin{proof}
    Let $(u_n)\subset H_0^1(V)$ be the Palais-Smale sequence for $I$. It means that there exists a sequence $(\varepsilon_n)$ satisfying $\varepsilon_n\to 0$ and such that 
    \begin{equation*}
        \psi(v)-\psi(u_n)+\langle\Phi'(u_n),v-u_n\rangle\geq -\varepsilon_n\|v-u_n\|_{H_0^1(V)}\quad \text{for all }v\in H_0^1(V),
    \end{equation*}
    where $\psi$ and $\Phi$ are defined by \eqref{equ:psi} and \eqref{equ:Phi}, respectively. In particular, taking $v = (1 + t)u_n$, dividing both sides by $t$ and passing to the limit $t \downarrow 0$ we get 
    \begin{equation*}
        \mathcal{E}_a(u_n) - \int_V f\big(\boldsymbol{x}, u_n(\boldsymbol{x})\big) u_n(\boldsymbol{x})\, d\mu(\boldsymbol{x}) \geq -\varepsilon_n\|u_n\|_{H_0^1}\quad \text{for every }n\in \mathbb{N}
    \end{equation*}
    and hence 
    \begin{equation}
        \label{equ:proof_AR_property-1}
        -\tfrac{1}{\theta} \mathcal{E}_a(u_n) + \tfrac{1}{\theta} \int_V f\big(\boldsymbol{x}, u_n(\boldsymbol{x})\big) u_n(\boldsymbol{x})\, d\mu(\boldsymbol{x}) \leq \tfrac{\varepsilon_n}{\theta}\|u_n\|_{H_0^1}\quad \text{for every }n\in \mathbb{N}
    \end{equation}
    Now take $M_0$ from assumption \eqref{ass:f_Ambrosetti-Rabinowitz}. Then for every $\boldsymbol{x}\in V$ either:
    \begin{itemize}
        \item $|u_n(\boldsymbol{x})| \geq M_0$ and then 
        \begin{equation*}
            \tfrac{1}{\theta} f\big(\boldsymbol{x}, u_n(\boldsymbol{x})\big) u_n(\boldsymbol{x}) - F\big(\boldsymbol{x}, u_n(\boldsymbol{x})\big) \geq 0,
        \end{equation*}
        \item or $|u_n(\boldsymbol{x})| < M_0$ and consequently, using function $f_{M_0}$ from assumption \eqref{ass:f_Caratheodory}, we get
        \begin{equation*}
            \begin{split}
                \left|\tfrac{1}{\theta} f\big(\boldsymbol{x}, u_n(\boldsymbol{x})\big) u_n(\boldsymbol{x}) - F\big(\boldsymbol{x}, u_n(\boldsymbol{x})\big)\right| & \leq \tfrac{1}{\theta} \left| f\big(\boldsymbol{x}, u_n(\boldsymbol{x})\big) u_n(\boldsymbol{x}) \right| + \left| F\big(\boldsymbol{x}, u_n(\boldsymbol{x})\big)\right| \\
                & \leq \tfrac{1}{\theta} f_{M_0}(\boldsymbol{x}) M_0 + \left|\int_0^{u(\boldsymbol{x})} f(\boldsymbol{x}, s)\, ds\right| \\
                & \leq \tfrac{1 + \theta}{\theta} M_0 f_{M_0}(\boldsymbol{x}).
            \end{split}
        \end{equation*}
    \end{itemize}
    Therefore, for every $n\in \mathbb{N}$, we get
    \begin{equation}
        \label{equ:proof_AR_property-2}
        \begin{split}
        \int_V \left(\tfrac{1}{\theta} f\big(\boldsymbol{x}, u_n(\boldsymbol{x})\big) u_n(\boldsymbol{x}) - F\big(\boldsymbol{x}, u_n(\boldsymbol{x})\big)\right)\, d\mu(\boldsymbol{x}) & \geq - \int_V \tfrac{1 + \theta}{\theta} M_0 f_{M_0}(\boldsymbol{x})\, d\mu(\boldsymbol{x})\\
        & = -\tfrac{1 + \theta}{\theta} M_0 \|f_{M_0}\|_{L^1}.
        \end{split}
    \end{equation}
    Now adding $I(u_n)$ to both sides of \eqref{equ:proof_AR_property-1} gives
    \begin{equation*}
        \begin{split}
            I(u_n) + \tfrac{\varepsilon_n}{\theta}\|u_n\|_{H^1_0} & \geq \tfrac{\theta - 2}{2\theta} \mathcal{E}_a(u_n) + \int_V \left(\tfrac{1}{\theta} f\big(\boldsymbol{x}, u_n(\boldsymbol{x})\big) u_n(\boldsymbol{x}) - F\big(\boldsymbol{x}, u_n(\boldsymbol{x})\big)\right)\, d\mu(\boldsymbol{x}) \\ 
            & \geq \tfrac{\theta - 2}{2\theta} \mathcal{E}_a(u_n) - \tfrac{1 + \theta}{\theta} M_0 \|f_{M_0}\|_{L^1} \\
            & \geq a_- \tfrac{\theta - 2}{2\theta} \|u_n\|_{H^1_0}^2 - \tfrac{1 + \theta}{\theta} M_0 \|f_{M_0}\|_{L^1}\quad \text{for every }n\in \mathbb{N}.
        \end{split}
    \end{equation*}
    As a consequence we obtain
    \begin{equation*}
        I(u_n) \geq a_- \tfrac{\theta - 2}{2\theta} \|u_n\|_{H^1_0}^2 - \tfrac{\varepsilon_n}{\theta}\|u_n\|_{H^1_0} - \tfrac{1 + \theta}{\theta} M_0 \|f_{M_0}\|_{L^1}\quad \text{for every }n\in \mathbb{N}.
    \end{equation*}
    Now, supposing that $\|u_{n_k}\|_{H^1_0} \to \infty$ for some increasing sequence $(n_k)$, we get $I(u_{n_k}) \to \infty$ as well, but this clearly contradicts the assumption that $(u_n)$ is the Palais-Smale sequence for $I$. Hence $(u_n)$ is bounded.
\end{proof}

With the above lemma at hand, we turn to the next result concerning the existence of a nontrivial solution to problem~\eqref{equ:main_problem}. The proof is based on abstract results which implicitly rely on the direct method of the calculus of variations and on the Mountain Pass Theorem.

\begin{theorem}
    \label{thm:nonzero_solution}
    Assume that \eqref{ass:a} and \eqref{ass:f_Caratheodory} hold. If additionally one of the following condition holds:
    \begin{itemize}
        \item assumptions \eqref{ass:f_sublinear} and \eqref{ass:f_0} are satisfied or
        \item assumptions \eqref{ass:f_R} and \eqref{ass:f_Ambrosetti-Rabinowitz} hold,
    \end{itemize}
    then problem \eqref{equ:main_problem} has at least one nonzero solution.
\end{theorem}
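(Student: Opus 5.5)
We treat the two alternatives separately, in both cases working with $I=\psi+\Phi$ where $\psi$ and $\Phi$ are given by \eqref{equ:psi} and \eqref{equ:Phi}. By Proposition~\ref{prop:E_a_satisfies_psi} and Lemma~\ref{lem:F_satisfies_Phi}, assumptions \eqref{ass:E}, \eqref{ass:psi}, \eqref{ass:psi_rho} and \eqref{ass:Phi} are satisfied. Note that $I(0)=0$, since $\mathcal{E}_a(0)=0$ and $F(\boldsymbol{x},0)=0$.

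\emph{First case: \eqref{ass:f_sublinear} and \eqref{ass:f_0}.} Theorem~\ref{thm:existence} provides a global minimizer $u^*$ of $I$, which is a critical point. It therefore suffices to show that $\min I<0=I(0)$, which forces $u^*\neq 0$. Fix any $u\in H^1_0(V)\setminus\{0\}$, so that $\|u\|_{L^2}>0$, as required in the proof of Lemma~\ref{lem:small_near_zero}. By \eqref{equ:direct_inequalities} and the $2$-homogeneity of $\mathcal{E}_a$ (which is clear from the $\limsup$ formula \eqref{equ:functional_E_a}), we have $\psi(tu)=\tfrac{t^2}{2}\mathcal{E}_a(u)\le \tfrac{a_+t^2}{2}\|u\|_{H^1_0}^2$. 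Applying Lemma~\ref{lem:small_near_zero} with $c=\tfrac{a_+}{2}\|u\|_{H^1_0}^2$ gives $t^*>0$ with $\Phi(tu)<-ct^2$ for $t\in(0,t^*)$. Hence $I(tu)<0$ for such $t$, and this case is done.

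\emph{Second case: \eqref{ass:f_R} and \eqref{ass:f_Ambrosetti-Rabinowitz}.} We apply the first part of Proposition~\ref{prop:Mawhin2}. The Palais--Smale condition follows from Lemma~\ref{lem: P-S condition} combined with Lemma~\ref{lem:every_(PS)_is_bounded=>I_(PS)}.

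For the geometry, take $r=R/9$. On $\partial B_r$, estimate \eqref{equ:direct_inequalities} and Lemma~\ref{lem:mountains} give
\[
I(u)\ge \tfrac{a_-}{2}\cdot\tfrac{R^2}{81}-\alpha=\tfrac{a_-R^2}{162}-\alpha=:\delta>0,
\]
so $\inf_{\partial B_r}I\ge\delta>I(0)=0$. This is exactly where the constant $162$ enters.

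For the point $e$, fix $u\neq0$ and use Lemma~\ref{lem: AR_consequence}:
\[
I(tu)\le \tfrac{a_+t^2}{2}\|u\|_{H^1_0}^2-At^\theta\|u\|_{L^\theta}^\theta+\|B\|_{L^1}.
\]
Since $\theta>2$ and $\|u\|_{L^\theta}>0$ (as $u$ is continuous and nonzero), the right-hand side tends to $-\infty$ as $t\to\infty$. So for large $t$ the element $e=tu$ satisfies $\|e\|>r$ and $I(e)\le 0$. Proposition~\ref{prop:Mawhin2} then yields a nonzero critical point, i.e. a nonzero solution of \eqref{equ:main_problem}.

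The main point needing care is the second case. One must make sure that $B\in L^1$ in Lemma~\ref{lem: AR_consequence}, so that the bound on $I(tu)$ is finite, and that the Palais--Smale condition is available in the precise form required by Szulkin's mountain pass theorem. Both are supplied by the earlier lemmas, so the proof reduces to the verifications above. The first case is essentially immediate.
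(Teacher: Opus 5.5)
Your proof is correct and follows the paper's argument step by step. In the first case you show the global minimizer from Theorem~\ref{thm:existence} is nonzero via Lemma~\ref{lem:small_near_zero}; in the second you get Palais--Smale from the lemmas, take $r=R/9$ via Lemma~\ref{lem:mountains} and $e=tu$ via Lemma~\ref{lem: AR_consequence}, and apply Proposition~\ref{prop:Mawhin2}. Your choice $c=\tfrac{a_+}{2}\|u\|_{H^1_0}^2$ and the sign $+\|B\|_{L^1}$ are in fact the correct forms of the paper's computations, which tacitly use $\psi(tu)\le\tfrac{t^2}{2}\|u\|_{H^1_0}^2$ (only guaranteed when $a_+\le 1$) and write $-\|B\|_{L^1}$.
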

\begin{proof}
    We split the proof into two cases:
    \begin{itemize}
        \item If assumptions~\eqref{ass:f_sublinear} and~\eqref{ass:f_0} are satisfied, then Theorem~\ref{thm:existence} guarantees the existence of a solution $u^*$ satisfying condition~\eqref{equ:minimization_I}. Note that $I(0)=0$, and let $u\in H^1_0(V)\setminus\{0\}$ be arbitrary. Setting $c=\|u\|_{H^1_0}^2$ in Lemma~\ref{lem:small_near_zero} and taking any $t\in(0,t^*)$, we obtain
        \begin{equation}
            \label{equ:proof:thm:nonzero_solution}
            I(tu)=\psi(tu)+\Phi(tu)<\tfrac{t^2}{2}\|u\|_{H^1_0}^2 - t^2\|u\|_{H^1_0}^2 = -\tfrac{t^2}{2}\|u\|_{H^1_0}^2 < 0.
        \end{equation}
        Consequently, $I(0) > I(u^*)$ and hence $u^* \neq 0$.
        \item If, on the other hand, assumptions~\eqref{ass:f_R} and~\eqref{ass:f_Ambrosetti-Rabinowitz} hold, then the conclusion follows directly from Proposition~\ref{prop:Mawhin2}. Indeed, by combining Lemmas~\ref{lem:every_(PS)_is_bounded=>I_(PS)},~\ref{lem:F_satisfies_Phi},~\ref{lem:mountains}~and~\ref{lem: P-S condition}, and Proposition~\ref{prop:E_a_satisfies_psi}, we get that the functional $I$ satisfies the Palais--Smale condition. It therefore remains to verify assumption~\eqref{equ:Mountain_Pass_geometry_for_I}. To this end, set $r=\frac{R}{9}$. Then, by Lemma~\ref{lem:mountains}, we obtain
        \begin{equation*}
            I(u)=\tfrac{1}{2}\mathcal{E}_a(u)-\alpha \geq \tfrac{a_-}{2}\|u\|_{H^1_0}^2-\alpha = \tfrac{a_-R^2}{162}-\alpha > 0 = I(0) \quad \text{for every } u\in \partial B_r.
        \end{equation*}
        The second condition in~\eqref{equ:Mountain_Pass_geometry_for_I} follows from Lemma~\ref{lem: AR_consequence}. Indeed, let $u\in H^1_0(V)\setminus\{0\}$ and $t>0$. Using the estimate obtained there, we compute
        \begin{equation*}
            I(tu)=\tfrac{1}{2}\mathcal{E}_a(tu)-\int_V F\big(\boldsymbol{x},tu(\boldsymbol{x})\big)\,d\mu(\boldsymbol{x}) \leq \tfrac{t^2}{2}\mathcal{E}_a(u)-A t^\theta \|u\|_{L^\theta}^\theta-\|B\|_{L^1}.
        \end{equation*}
        Letting $t\to\infty$, we obtain $I(tu)\to -\infty$. Hence, by choosing $e=tu$ for $t$ sufficiently large, condition~\eqref{equ:Mountain_Pass_geometry_for_I} is satisfied. Consequently, Proposition~\ref{prop:Mawhin2} applies and yields the desired conclusion. \qedhere
    \end{itemize}
\end{proof}

By combining the proofs of Theorems~\ref{thm:existence} and~\ref{thm:nonzero_solution}, and by applying Proposition~\ref{prop:Mawhin2}, we can prove the following result.

\begin{theorem}
    \label{thm:two_nonzero_solutions}
    Assume that \eqref{ass:a}, \eqref{ass:f_Caratheodory}, \eqref{ass:f_0}, \eqref{ass:f_R} and \eqref{ass:f_Ambrosetti-Rabinowitz} hold. Then problem \eqref{equ:main_problem} has at least two nonzero solutions.
\end{theorem}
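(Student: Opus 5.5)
The plan is to apply the second part of Proposition~\ref{prop:Mawhin2} to $I=\psi+\Phi$, with $\psi$ given by \eqref{equ:psi} and $\Phi$ by \eqref{equ:Phi}. The structural hypotheses \eqref{ass:E}, \eqref{ass:psi}, \eqref{ass:psi_rho} and \eqref{ass:Phi} follow from Proposition~\ref{prop:E_a_satisfies_psi} and Lemma~\ref{lem:F_satisfies_Phi}, since $H^1_0(V)$ is Hilbert. For the Palais--Smale condition, Lemma~\ref{lem: P-S condition} shows that every Palais--Smale sequence is bounded under \eqref{ass:f_Ambrosetti-Rabinowitz}, and Lemma~\ref{lem:every_(PS)_is_bounded=>I_(PS)} then gives the condition. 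This step is identical to the second case of the proof of Theorem~\ref{thm:nonzero_solution}.

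Next I take $r=\frac{R}{9}$ and verify the mountain pass geometry \eqref{equ:Mountain_Pass_geometry_for_I} exactly as in that second case. On $\partial B_r$, Lemma~\ref{lem:mountains} and \eqref{equ:direct_inequalities} give
\[
I(u)\ge \tfrac{a_-R^2}{162}-\alpha>0=I(0).
\]
Taking any $u\neq0$, Lemma~\ref{lem: AR_consequence} gives $I(tu)\to-\infty$ as $t\to\infty$. So $e=tu$ with $t$ large satisfies $\|e\|>r$ and $I(e)\le I(0)$.

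The remaining hypothesis is \eqref{equ:condition_for_third_critical_point}, namely $\inf_{\overline{B_r}}I<I(0)=0$, and here I use \eqref{ass:f_0} as in the first case of Theorem~\ref{thm:nonzero_solution}. Fix $u\neq0$ and apply Lemma~\ref{lem:small_near_zero} with $c=\frac{a_+}{2}\|u\|_{H^1_0}^2$ (or any larger constant). By \eqref{equ:direct_inequalities}, for $t\in(0,t^*)$,
\[
I(tu)\le \tfrac{a_+t^2}{2}\|u\|_{H^1_0}^2-ct^2<0.
\]
This corrects the implicit use of $a\equiv1$ in \eqref{equ:proof:thm:nonzero_solution} by choosing $c$ proportional to $a_+$. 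Shrinking $t$ further so that $t\|u\|_{H^1_0}<r$ puts $tu$ in $B_r$, which yields \eqref{equ:condition_for_third_critical_point}. Proposition~\ref{prop:Mawhin2} then produces two nonzero critical points of $I$, that is, two nonzero solutions of \eqref{equ:main_problem}.

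The only delicate point is making sure the two critical points are genuinely distinct and nonzero. One is a local minimizer in $B_r$ with negative energy, hence nonzero; the other is a mountain pass point with positive energy at least $\inf_{\partial B_r}I>0$. This distinction is already encoded in \cite[Proposition~2]{Bereanu2012}, whose extra hypothesis, boundedness from below on bounded sets, is supplied by Lemma~\ref{lem:boundedness_of_I}. I would therefore simply check that each hypothesis of Proposition~\ref{prop:Mawhin2} is met, the coefficient $a_+$ in the small-ball estimate being the only real adjustment.
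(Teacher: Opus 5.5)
Your proposal is correct and follows the paper's proof: the Palais--Smale condition and the mountain pass geometry are taken from the second case of Theorem~\ref{thm:nonzero_solution}, and \eqref{equ:condition_for_third_critical_point} is obtained from Lemma~\ref{lem:small_near_zero}. Your constant $c$ proportional to $a_+$ is a genuine small improvement over the computation \eqref{equ:proof:thm:nonzero_solution}, which tacitly uses $\mathcal{E}_a\le\mathcal{E}$; to get $I(tu)<0$, either rely on the strict inequality in Lemma~\ref{lem:small_near_zero} or take $c$ strictly larger than $\tfrac{a_+}{2}\|u\|_{H^1_0}^2$. Your explicit requirement $t\|u\|_{H^1_0}<r$, which places $tu$ inside $B_r$, is likewise a useful addition the paper leaves implicit.
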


\begin{proof}
    In the second part of the proof of Theorem~\ref{thm:nonzero_solution} we have already shown that, under the assumptions of the present theorem, the functional $I$ satisfies the Palais--Smale condition and that \eqref{equ:Mountain_Pass_geometry_for_I} holds. In order to apply Proposition~\ref{prop:Mawhin2}, it therefore remains to verify \eqref{equ:condition_for_third_critical_point}. This is, however, an immediate consequence of Lemma~\ref{lem:small_near_zero}. Indeed, fix an arbitrary nonzero element $u \in H^1_0(V)$. By Lemma~\ref{lem:small_near_zero}, repeating the computation in \eqref{equ:proof:thm:nonzero_solution}, we obtain that there exists $t^* > 0$ such that $I(tu) < 0$ for every $t \in (0,t^*)$. Hence, by Proposition~\ref{prop:Mawhin2}, the problem~\eqref{equ:main_problem} admits at least two nontrivial solutions.  
\end{proof}

\section*{Declarations}

The authors declare that no funds, grants, or other support were received during the preparation of this manuscript.

\section*{Acknowledgements}

This paper has been completed while the second author was a Doctoral Candidate in the Interdisciplinary Doctoral School at the Lodz University of Technology, Poland. The author's contribution to this research was estimated at 50\%.


\begin{thebibliography}{99}

\bibitem{Barnsley}
M.~F. Barnsley,
\emph{Fractals everywhere},
Academic Press, Boston, MA, 1988.

\bibitem{Pietrusiak}
M. Be{\l}dzi{\'n}ski, M. Galewski and F. Pietrusiak,
Minimization principle for hemivariational--variational inequality driven by uniformly monotone operators with application to problems in contact mechanics,
\emph{Nonlinear Analysis: Real World Applications} \textbf{79} (2024), 104134.

\bibitem{Bereanu2012}
C. Bereanu, P. Jebelean and J. Mawhin,
Multiple solutions for Neumann and periodic problems with singular $\varphi$-Laplacian,
\emph{Nonlinear Analysis} \textbf{75} (2012), 731--740.

\bibitem{BonannoBisciRadulescu2012}
G. Bonanno, G. Molica Bisci and V. R\u{a}dulescu,
Variational analysis for a nonlinear elliptic problem on the Sierpi\'nski gasket,
\emph{ESAIM: Control, Optimisation and Calculus of Variations} \textbf{18} (2012), no.~4, 941--953.

\bibitem{Falconer1999}
K.~J. Falconer and J. Hu,
Non-linear elliptic equations on the Sierpi\'nski gasket,
\emph{Journal of Mathematical Analysis and Applications} \textbf{240} (1999), no.~2, 552--573.

\bibitem{Figueiredo}
D.~G. de Figueiredo,
\emph{Lectures on the Ekeland Variational Principle with Applications and Detours},
Lectures on Mathematics and Physics, no.~81,
Springer, Berlin, 1989.

\bibitem{Fukushima1992}
M. Fukushima and T. Shima,
On a spectral analysis for the Sierpi\'nski gasket,
\emph{Potential Analysis} \textbf{1} (1992), 1--35.

\bibitem{Galewski-book}
M. Galewski,
\emph{Basic Monotonicity Methods with Some Applications},
Compact Textbooks in Mathematics,
Birkh\"auser, Cham, 2021.

\bibitem{Galewski2019}
M. Galewski,
On the mountain pass solutions to boundary value problems on the Sierpi\'nski gasket,
\emph{Results in Mathematics} \textbf{74} (2019), no.~4, 167.

\bibitem{Galewski2019b}
M. Galewski,
On the application of monotonicity methods to the boundary value problems on the Sierpi\'nski gasket,
\emph{Numerical Functional Analysis and Optimization} \textbf{40} (2019), no.~11, 1344--1354.

\bibitem{Galewski}
M. Galewski,
On variational nonlinear equations with monotone operators,
\emph{Advances in Nonlinear Analysis} \textbf{10} (2021), 289--300.

\bibitem{GibbonsRajStrichartz}
M. Gibbons, A. Raj and R.~S. Strichartz,
The finite element method on the Sierpi\'nski gasket,
\emph{Constructive Approximation} \textbf{17} (2001), no.~4, 561--588.

\bibitem{hiriarturruty1993}
J.-B. Hiriart-Urruty and C. Lemar{\'e}chal,
\emph{Convex Analysis and Minimization Algorithms I: Fundamentals},
Springer, Berlin, 1993.

\bibitem{Hu2004}
J. Hu,
Multiple solutions for a class of nonlinear elliptic equations on the Sierpi\'nski gasket,
\emph{Science in China Series A: Mathematics} \textbf{47} (2004), no.~5, 772--786.

\bibitem{IdczakRogowski}
D. Idczak and A. Rogowski,
On a generalization of Krasnoselskii's theorem,
\emph{Journal of the Australian Mathematical Society} \textbf{72} (2002), 389--394.

\bibitem{Jabri}
Y. Jabri,
\emph{The Mountain Pass Theorem: Variants, Generalizations and Some Applications},
Encyclopedia of Mathematics and its Applications, vol.~95,
Cambridge University Press, Cambridge, 2003.

\bibitem{Kigami1989}
J. Kigami,
A harmonic calculus on the Sierpi\'nski spaces,
\emph{Japan Journal of Applied Mathematics} \textbf{6} (1989), 259--290.

\bibitem{Kigami2001}
J. Kigami,
\emph{Analysis on Fractals},
Cambridge University Press, Cambridge, 2001.

\bibitem{MolicaBisciRadulescu2015}
G. Molica Bisci and V.~D. R\u{a}dulescu,
A characterization for elliptic problems on fractal sets,
\emph{Proceedings of the American Mathematical Society} \textbf{143} (2015), no.~7, 2959--2968.

\bibitem{SahuPriyadarshi2021}
A. Sahu and A. Priyadarshi,
A system of $p$-Laplacian equations on the Sierpi\'nski gasket,
\emph{Mediterranean Journal of Mathematics} \textbf{18} (2021), 92.

\bibitem{Strichartz2006}
R.~S. Strichartz,
\emph{Differential Equations on Fractals: A Tutorial},
Princeton University Press, Princeton, NJ, 2006.

\bibitem{strichartzwong2004}
R.~S. Strichartz and C. Wong,
The $p$-Laplacian on the Sierpi\'nski gasket,
\emph{Nonlinearity} \textbf{17} (2004), no.~2, 595--616.

\bibitem{Szulkin}
A. Szulkin,
Minimax principles for lower semicontinuous functions and applications to nonlinear boundary value problems,
\emph{Annales de l'Institut Henri Poincar\'e, Analyse Non Lin\'eaire} \textbf{3} (1986), 77--109.

\end{thebibliography}
\end{document}